\documentclass[11pt]{amsart}  
\usepackage{amsmath,amssymb,amsfonts,times,hyperref,xcolor}

\title{On the theta number of powers of cycle graphs}

\author{Christine Bachoc}
\address{Universit\'e Bordeaux I\\ Institut de Math\'ematiques de
  Bordeaux (IMB) \\ 351, cours de  la Lib\'eration, 33405 Talence,
  France}
\email{christine.bachoc@math.u-bordeaux1.fr}
\author{Arnaud P\^echer}
\address{Universit\'e Bordeaux I\\ Laboratoire Bordelais de Recherche Informatique (LaBRI) / INRIA Sud-Ouest\\ 351, cours de  la Lib\'eration, 33405 Talence, France}
\email{arnaud.pecher@labri.fr}
\thanks{Supported by the ANR/NSC project GraTel ANR-09-blan-0373-01, NSC98-2115-M-002-013-MY3 and NSC99-2923-M-110-001-MY3.}
\author{Alain Thi\'ery}
\address{Universit\'e Bordeaux I\\ Institut de Math\'ematiques de
  Bordeaux (IMB) \\ 351, cours de  la Lib\'eration, 33405 Talence,
  France}
\email{alain.thiery@math.u-bordeaux1.fr}

\subjclass{05C15, 05C85, 90C27}
\keywords{theta number, circular complete graphs, circular perfect graphs}

\date{\today}
\newtheorem{defi}{Definition}[section]
\newtheorem{definition}[defi]{Definition}
\newtheorem{proposition}[defi]{Proposition}
\newtheorem{theorem}[defi]{Theorem}

\newtheorem{remark}[defi]{Remark}
\newtheorem{corollary}[defi]{Corollary}
\newtheorem{lemma}[defi]{Lemma}
\newtheorem{claim}[defi]{Claim}

\newcommand{\N}{{\mathbb{N}}} 
\newcommand{\Z}{{\mathbb{Z}}} 
\newcommand{\Q}{{\mathbb{Q}}}
\newcommand{\R}{{\mathbb{R}}}

\newcommand{\OO}[1]{\displaystyle{\operatorname{O}\left(#1\right)}}

\newcommand{\Trace}{\operatorname{Trace}}
\newcommand{\Norm}{\operatorname{Norm}}

\newcommand{\Gal}{\operatorname{Gal}}

\begin{document}

\begin{abstract}
We give a closed formula for Lov\'asz's theta number of the powers of 
cycle graphs $C_k^d$ and of their complements, the circular complete 
graphs $K_{k/d}$. As a consequence, we establish that the circular
chromatic number of a circular  perfect graph is computable in
polynomial time. We also derive an asymptotic estimate for the theta
number
of $C_k^d$.
\end{abstract}

\maketitle

\section{Introduction}

Let $G=(V,E)$ be a finite graph with vertex set $V$ and edge set
$E$. The {\em clique number} $\omega(G)$ and the {\em chromatic number} $\chi(G)$
are classical invariants of $G$ which  can be defined in terms of graph homomorphisms. 

A {\em homomorphism} from a graph $G=(V,E)$ to a graph
$G'=(V',E')$ is a mapping $f: V \to V'$ which preserves adjacency: if
$ij$ is an edge of $G$ then $f(i)f(j)$ is an edge of $G'$.  If there
is a homomorphism from $G$ to $G'$, we write $G\to G'$. 
Then, the chromatic number of $G$ is the smallest number $k$ such that 
$G\to K_k$, where $K_k$ denotes the complete graph with $k$ vertices. 
Similarly, the clique number is the largest number $k$ such that 
$K_k \to G$. 

In the seminal paper \cite{Lovasz}, Lov\'asz introduced the so-called
{\em theta number} $\vartheta(G)$ of a graph. On one hand, this number
provides an approximation of $\omega(G)$ and of $\chi(G)$ since (this
is the celebrated {\em Sandwich Theorem}) 
\begin{equation*}
\omega(G)\leq \vartheta(\overline{G})\leq \chi(G),
\end{equation*}
where $\overline{G}$ stands for the complement of $G$.
On the other hand, this number is the optimal value of a {\em
  semidefinite program} \cite{VandenbergheBoyd}, and, as such, is computable in polynomial
time with polynomial space encoding accuracy \cite{VandenbergheBoyd}, \cite{GrotschelAl}.
In contrast, the computation of either of the clique number or the chromatic
number 
is known to be NP-hard.

By definition, a graph $G$ is {\em perfect}, if for every
induced subgraph $H$, $\omega(H)=\chi(H)$ \cite{Berge}. As
$\vartheta(\overline{G})=\chi(G)$ and $\chi(G)$ is an integer,
we have 

\begin{theorem}\label{GLS} (Gr\"otschel, Lov\'asz and Schrijver)
  \cite{GrotschelAl}
For every perfect graph, the chromatic number is computable in
polynomial time.
\end{theorem}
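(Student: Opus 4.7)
The plan is to combine the two ingredients already stated in the excerpt. First, by definition of perfection applied to $G$ as an induced subgraph of itself, we have $\omega(G)=\chi(G)$. Inserting this into the Sandwich Theorem
\begin{equation*}
\omega(G)\leq \vartheta(\overline{G})\leq \chi(G)
\end{equation*}
forces equality throughout, so $\chi(G)=\vartheta(\overline{G})$. Thus, for every perfect graph $G$, the problem of computing the chromatic number reduces to the problem of computing the theta number of its complement.

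Next, I would invoke the algorithmic fact cited in the excerpt: $\vartheta(\overline{G})$ is the optimum of a semidefinite program, so by the ellipsoid method of Gr\"otschel-Lov\'asz-Schrijver it can be approximated to within any prescribed additive error $\varepsilon>0$ in time polynomial in $|V(G)|$ and $\log(1/\varepsilon)$. Choosing $\varepsilon<1/2$ and rounding the resulting rational to the nearest integer recovers $\chi(G)$ exactly, since $\chi(G)$ is known a priori to be a non-negative integer. This yields a polynomial-time algorithm for $\chi(G)$.

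The only subtle point is the last one: one must be sure that the SDP for the theta number can be solved with numerical accuracy good enough to separate consecutive integers within polynomial time and polynomial space encoding. This is precisely what the ellipsoid-method analysis referenced in the excerpt provides; once it is taken as a black box, the theorem follows in a few lines. So the argument is essentially a one-paragraph deduction from two results already available to the reader, and I do not anticipate any serious obstacle beyond making the rounding step explicit.
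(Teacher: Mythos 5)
Your argument is exactly the paper's: the paper observes in the sentence preceding Theorem~\ref{GLS} that perfection forces $\vartheta(\overline{G})=\chi(G)$ by the Sandwich Theorem, and that since $\chi(G)$ is an integer it suffices to approximate the theta SDP to additive error below $1/2$ and round. You have simply expanded the same deduction with the rounding step made explicit, so the proposal is correct and follows the paper's route.
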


There are very few families of graphs for which an explicit formula for
the theta number is known. In \cite{Lovasz}, the theta numbers of the
cycles $C_k$ and of the Kneser graphs $K(n,r)$ are explicitly
computed.
In particular, it is shown that, if $k$ is an odd number,
\begin{equation}\label{theta cyclic}
\vartheta(C_{k}) = \frac{k \cos\left(\frac{\pi}{k}\right)}{1+\cos\left(\frac{\pi}{k}\right)}.
\end{equation}

In this paper, we give  a closed formula for the theta number of the
{\em circular complete graphs} $K_{k/d}$ and of their complements
$\overline{K_{k/d}}$. For $k\geq 2d$, the graph $K_{k/d}$ has $k$ vertices
$\{0,1,\dots,k-1\}$
and two vertices $i$ and $j$ are connected by an edge if 
$d\leq |i-j|\leq k-d$. We have $K_{k/1}=K_k$ and
$\overline{K_{k/2}}=C_k$.
More generally, the graph $\overline{K_{k/d}}$ is the $(d-1)^{\text{th}}$ power of 
the cycle graph $C_k$. Because the automorphism group of $K_{k/d}$ is
vertex transitive (it contains the cyclic permutation $(0,1,\dots,k-1)$), we have (see \cite[Theorem 8]{Lovasz})
\begin{equation}\label{theta prod}
\vartheta(K_{k/d})\vartheta(\overline{K_{k/d}})=k.
\end{equation}
Besides the case $d=2$ demonstrated by Lov\'asz, the only case
previously known was due to Brimkov et al. who proved 
in \cite{Brimkov} that, for $d=3$ and $k$ odd, 
\begin{equation}\label{theta Brimkov}
\vartheta\left(\overline{K_{k/3}}\right)  =   k \left( 1 - \frac{\frac{1}{2} - \cos\left(\frac{2\pi}{k}\lfloor \frac{k}{3} \rfloor\right) - \cos\left(\frac{2\pi}{k}\left(\lfloor \frac{k}{3} \rfloor+1\right)\right)}{\left(\cos\left(\frac{2\pi}{k}\lfloor \frac{k}{3} \rfloor\right)-1\right)\left(\cos\left(\frac{2\pi}{k}\left(\lfloor \frac{k}{3} \rfloor+1\right)\right)-1\right)} \right).
\end{equation}
In Section \ref{explicit formula}, we prove the following:
\begin{theorem}\label{Theorem 1}
Let $d\geq 2$, $k\geq 2d$, with $\gcd(k,d)=1$. Let, for $0\leq n\leq d-1$,
\begin{equation*}
c_n:=\cos\Big(\frac{2n\pi}{d}\Big), \quad
a_n:=\cos\Big(\Big\lfloor \frac{n k}{d} \Big\rfloor
\frac{2\pi}{k}\Big).
\end{equation*}
Then
\begin{equation}\label{formula for theta}
\vartheta(\overline{K_{k/d}})=\frac{k}{d}\sum_{n=0}^{d-1}\ \prod_{s=1}^{d-1}  \Big(\frac{c_n-a_s}{1-a_s}\Big).
\end{equation}
\end{theorem}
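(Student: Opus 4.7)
My plan is to exploit the $\mathbb{Z}_k$-Cayley structure of $\overline{K_{k/d}}$ (with connection set $S=\{\pm 1,\ldots,\pm(d-1)\}$) and the standard symmetry reduction of the Lov\'asz SDP to collapse the computation to a small linear program, and then to produce matching primal and dual optima. Since the optimal SDP matrix $X$ can be taken circulant, diagonalization in the Fourier basis recasts $\vartheta(\overline{K_{k/d}})$ as the LP
\begin{equation*}
\max\{k y_0 : \textstyle\sum_{n=0}^{k-1} y_n = 1,\ \sum_{n=0}^{k-1} y_n \cos(2\pi nj/k)=0 \text{ for }1\le j\le d-1,\ y_n\ge 0\},
\end{equation*}
whose LP dual asks for a cosine polynomial $Q(z)=\alpha_0+\sum_{j=1}^{d-1}\alpha_j\cos(2\pi jz/k)$ satisfying $Q(0)\ge k$ and $Q(n)\ge 0$ for $n=1,\ldots,k-1$, minimizing $\alpha_0$.

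The dual candidate I would test is
\begin{equation*}
Q(z) = k\prod_{s=1}^{d-1}\frac{\cos(2\pi z/k)-a_s}{1-a_s},
\end{equation*}
which satisfies $Q(0)=k$ and $Q(m_s)=0$ where $m_s=\lfloor sk/d\rfloor$; by evenness of cosine, $Q$ also vanishes at $k-m_s$. The arithmetic I would invoke is that $\gcd(k,d)=1$ forces $sk/d\notin\mathbb Z$ for $1\le s\le d-1$, giving $\lfloor sk/d\rfloor+\lfloor(d-s)k/d\rfloor=k-1$, hence $k-m_s=m_{d-s}+1$. Thus the $2(d-1)$ zeros of $Q$ in $\{1,\dots,k-1\}$ are exactly the consecutive-integer pairs $\{m_s,m_s+1\}$, $s=1,\dots,d-1$; and since $k\ge 2d$ implies $m_{s+1}-m_s\ge 2$, these pairs are pairwise separated by at least one integer. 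Tracing signs from $Q(0)>0$ across the simple zeros, $Q$ is nonnegative on every arc that contains an integer, and strictly negative only on the open intervals $(m_s,m_s+1)$, which contain no integers. Hence $Q$ is dual-feasible.

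To extract its value I would average $Q$ against the $d$-th roots of unity, which kill the nonzero frequencies: the identity $\sum_{n=0}^{d-1}\cos(2\pi jn/d)=0$ for $1\le j\le d-1$ gives
\begin{equation*}
\alpha_0 = \frac{1}{d}\sum_{n=0}^{d-1} Q(nk/d) = \frac{k}{d}\sum_{n=0}^{d-1}\prod_{s=1}^{d-1}\frac{c_n-a_s}{1-a_s},
\end{equation*}
which is precisely the right-hand side of \eqref{formula for theta}, yielding $\vartheta(\overline{K_{k/d}})\le\alpha_0$. For the matching lower bound I would invoke complementary slackness: any primal optimum must be supported on $\{0\}\cup\{m_s,k-m_s:1\le s\le d-1\}$, and solving the induced $d\times d$ linear system by Lagrange interpolation at the nodes $1=a_0,a_1,\ldots,a_{d-1}$ produces primal weights $w_s$ with $k w_0=\alpha_0$, closing the proof by strong LP duality.

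The main obstacle is twofold. First is the sign analysis of $Q$ on the integer lattice, where the pairing identity $m_s+m_{d-s}=k-1$ forced by $\gcd(k,d)=1$ is essential for trapping the negative arcs into integer-free intervals. Second is the nonnegativity of the Lagrange interpolation weights $w_s$, which expresses that the nodes $\{m_s/k\}$ are sufficiently near-equispaced to support a positive cubature exact for cosines of degree $<d$; this likely requires additional combinatorial argument using the $\gcd$ condition.
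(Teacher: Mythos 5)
Your approach is essentially the paper's: the same symmetry reduction to a Fourier-coefficient LP, the same dual candidate $Q(z)=kL_0(\cos(2\pi z/k))$ (with $L_0$ the Lagrange basis polynomial at $a_0=1$), the same pairing $\lfloor sk/d\rfloor+\lfloor(d-s)k/d\rfloor=k-1$ forcing the negative arcs of $Q$ to avoid the integer lattice, and the same averaging of $Q$ over $\{nk/d:0\le n\le d-1\}$ to extract $\alpha_0$. All of that is correct and establishes the one-sided bound $\vartheta(\overline{K_{k/d}})\le\frac{k}{d}\sum_{n=0}^{d-1}\prod_{s=1}^{d-1}\frac{c_n-a_s}{1-a_s}$.

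The gap is exactly the issue you flag in your last sentence, and it is not a minor loose end: you cannot close the lower bound by invoking complementary slackness, because complementary slackness with $Q$ only pins down the support of a primal optimum once you already know $Q$ is dual-optimal, which is what you are trying to prove. What is actually needed is that the unique solution of the $d\times d$ interpolation system supported on $\{0\}\cup\{m_s,\,k-m_s:1\le s\le d-1\}$ has nonnegative entries, and this is the technical core of the paper (Lemmas \ref{Lemma non neg}, \ref{Lemma permut}, \ref{Lemma non neg bis}). The paper proves it in two stages. First, writing $kv\equiv 1\pmod d$ and reindexing the nodes by $n\mapsto vn\bmod d$ makes the change-of-basis matrix between the Chebyshev and Lagrange bases symmetric, so the primal weights $(\lambda_{n,0})_n$ agree, up to permutation, with the Chebyshev coefficients $(\lambda_{0,\ell})_\ell$ of $L_0$. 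Second, those coefficients are shown nonnegative by substituting $2x=X+X^{-1}$, multiplying $\prod_{s=-(d-1)}^{d-1}(Y-X^s)$ by $(Y-1)$, and extracting the ratio recursion $C_j/C_{j-1}=\sin\bigl((\tfrac{j}{2}-d)\theta\bigr)/\sin\bigl(\tfrac{j\theta}{2}\bigr)$ whose sign is controlled by arithmetic on $\lfloor jv/d\rfloor$. Without an argument of this kind your proposal proves only the inequality above, not the equality in the theorem.
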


The notions of clique and chromatic numbers and of perfect
graphs have been refined  using circular complete graphs.
The {\em circular chromatic number} $\chi_c(G)$ of a graph $G$ was
first introduced by Vince in \cite{Vince}. It is the
minimum of the fractions $k/d$ for which $G\to K_{k/d}$. Later, Zhu defined the 
{\em circular clique number} $\omega_c(G)$ of $G$ to be the
maximum of the $k/d$ for which $K_{k/d}\to G$ and introduced the
notion of a {\em circular perfect graph}, a graph with the property that
every induced subgraph $H$ satisfies  $\omega_c(H)=\chi_c(H)$
(see Section 7 in \cite{Zhu2} for a survey on this notion). 
The class of circular perfect graphs extends in a natural way the
one  of perfect graphs. So one can ask for the properties of perfect
graphs that generalize to this larger class.
In this paper, we prove that Theorem \ref{GLS} still holds for
circular
perfect graphs:

\begin{theorem}\label{poly time}
For every circular perfect graph, the circular
chromatic number is computable in polynomial time.
\end{theorem}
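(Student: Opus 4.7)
The plan is to use $\vartheta(\overline{G})$ as a polynomial-time computable invariant that pins down $\chi_c(G)$ among polynomially many rational candidates. Write $\chi_c(G)=p/q$ in lowest terms. By a classical result of Vince, $p\leq |V(G)|=n$, so $p/q$ lies in the polynomial-size set $\mathcal{S}:=\{k/d : 2\leq k\leq n,\ 1\leq d\leq k,\ \gcd(k,d)=1\}$. The algorithm will first approximate $\vartheta(\overline{G})$ via the Gr\"otschel-Lov\'asz-Schrijver semidefinite programming method, then use Theorem \ref{Theorem 1} to evaluate $\vartheta(\overline{K_{k/d}})$ in closed form at each $k/d\in\mathcal{S}$, and finally return the candidate whose theta value matches.

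The heart of the proof is the identity
\begin{equation*}
\vartheta(\overline{G})=\vartheta(\overline{K_{p/q}}).
\end{equation*}
Circular perfection forces $\omega_c(G)=\chi_c(G)=p/q$, producing homomorphisms $K_{p/q}\to G$ and $G\to K_{p/q}$. The following general monotonicity then squeezes both sides: whenever $H\to H'$ via $f$, one has $\vartheta(\overline{H})\leq \vartheta(\overline{H'})$. Indeed, given an optimal orthonormal representation $(v_{h'})_{h'\in V(H')}$ of $\overline{H'}$ with handle $c$ attaining $\vartheta(\overline{H'})$, the pullback $w_h:=v_{f(h)}$ is an orthonormal representation of $\overline{H}$ (edges of $H$ map to edges of $H'$, hence $w_h\cdot w_g=0$ whenever $hg\in E(H)$), and $\max_h 1/(c\cdot w_h)^2\leq \max_{h'}1/(c\cdot v_{h'})^2$. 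Applying this to both homomorphisms yields the displayed equality.

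The main obstacle is to distinguish the correct candidate from the others: one must verify that the map $k/d\mapsto \vartheta(\overline{K_{k/d}})$ is injective (equivalently, strictly increasing) on $\mathcal{S}$. Weak monotonicity is automatic from Vince's criterion $K_{k/d}\to K_{k'/d'}\iff k/d\leq k'/d'$ combined with the monotonicity established above; strict monotonicity must be extracted from the closed formula \eqref{formula for theta}, for instance by showing that distinct $k/d$ give rise to genuinely distinct algebraic values. Once injectivity is in hand the precision analysis is routine: the values $\vartheta(\overline{K_{k/d}})$ are algebraic numbers of polynomially bounded degree and height, their pairwise gaps are therefore at least $2^{-\operatorname{poly}(n)}$, and Gr\"otschel-Lov\'asz-Schrijver delivers $\vartheta(\overline{G})$ to matching precision in polynomial time, after which the matching candidate $p/q$ is returned.
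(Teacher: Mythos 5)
Your overall strategy matches the paper's: reduce via circular perfection and homomorphism monotonicity to $\vartheta(\overline{G})=\vartheta(\overline{K_{p/q}})$, evaluate the closed formula at the polynomially many candidates with $k\leq n$, and separate the candidates by an effectively computable gap. The monotonicity argument you give via orthonormal representations is fine (the paper uses the dual SDP, same substance).

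However, the proof has a genuine gap at precisely the step you label the ``main obstacle.'' You observe that one must show the map $k/d\mapsto \vartheta(\overline{K_{k/d}})$ is injective on the candidate set, but you do not prove it; the remark that this can be done ``by showing that distinct $k/d$ give rise to genuinely distinct algebraic values'' is just a restatement of injectivity, not an argument. This is not a routine verification. In the paper it is a theorem in its own right whose proof is the most intricate part of the section: assuming two theta values agree, one deduces $\vartheta$ is constant on an interval $[k/d,k'/d']$; one then chooses a prime $q$ and integers $p,p+1$ with $p/q,(p+1)/q$ in the interval and $\gcd$-conditions, expresses $\vartheta$ as $\frac{p}{q}\bigl(1+\Trace_{\Q(\zeta_p)}^{\Q(\zeta_{pq})}(L_0(c_1))\bigr)$, uses linear disjointness of $\Q(\zeta_p)$ and $\Q(\zeta_{p+1})$ to conclude $\vartheta\in\Q$; a separate cyclotomic-norm computation forces $\vartheta\in\Z$; and finally a result from \cite{Pecher} that $\vartheta(\overline{K_{k/d}})\in\N$ implies $k/d\in\N$ yields a contradiction. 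None of this is visible in your sketch and it cannot be read off from formula \eqref{formula for theta} by inspection.

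A secondary, smaller gap: your claim that injectivity plus ``polynomially bounded degree and height'' immediately gives a $2^{-\operatorname{poly}(n)}$ separation needs the additional step, carried out in the paper, of clearing denominators so that the difference becomes a nonzero algebraic \emph{integer} in $\Q(\zeta_{kdk'd'})$ with explicitly bounded conjugates, to which a Liouville-type bound (the paper's Lemma \ref{minoration-entier}) applies. This part really is routine once set up, but it should be stated; as written your ``therefore'' is hiding the construction.
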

In previous works, the polynomial time computability of the chromatic number
of circular perfect graphs was established in \cite{Pecher} and of the
circular chromatic number of strongly circular perfect graphs
(i.e. circular perfect graphs such that the complementary graphs are
also circular perfect) was proved in \cite{Pecher2}.

In contrary to perfect graphs, $\vartheta(\overline{G})$ does not give
directly the result, 
as $\vartheta(G)$ is not always sandwiched between $\omega_c(G)$ and
$\chi_c(G)$: for instance, $\vartheta( \overline{C_5} ) = \sqrt{5}$ 
and $\omega_c( \overline{C_5} )=\chi_c( \overline{C_5} )=5/2$. To
bypass this difficulty, we make use of this following basic
observation: 
by definition, for every graph $G$ with $n$ vertices such that
$\omega_c(G) = \chi_c(G) = k/d$, we have $\vartheta(\overline{G}) =
\vartheta(\overline{K_{k/d}})$, where $k,d \leq n$ (see the next
section for more details). 
Hence, to ensure the polynomial time computability of $\chi_c(G)$, it
is sufficient to prove that the values $\vartheta(\overline{K_{k/d}})$
with $k,d \leq n$ 
are all distinct and separated by at least $\epsilon$ for some $\epsilon$ with polynomial space encoding.

This paper is organized as follows: Section \ref{preliminaries} gathers the needed
definitions and properties of Lov\'asz theta number and of circular
numbers. Section \ref{explicit formula} proves
Theorem \ref{Theorem 1},
while Section \ref{separating} proves
Theorem \ref{poly time}. In Section \ref{asymptotic}, the asymptotic
estimate
$\displaystyle \vartheta(\overline{K_{k/d}}) =\frac{k}{d}+\OO{\frac{1}{k}}$ 
is obtained (Theorem \ref{Theorem 4}). 

\section{Preliminaries}\label{preliminaries}

The theta number
$\vartheta(G)$ of a graph $G=(V,E)$ was introduced in \cite{Lovasz}, where many equivalent
formulations are given. The one of \cite[Theorem 4]{Lovasz} has the
form of a semidefinite program:
\begin{equation}\label{theta}
\begin{array}{ll}
\vartheta(G)=\max \Big\{ \displaystyle \sum_{(x,y)\in V^2} B(x,y) &:\ B\in
\R^{V\times V},\ B\succeq 0,\\
&\displaystyle \sum_{x\in V} B(x,x)=1,\\
&B(x,y)=0\quad xy\in E\Big\}
\end{array}
\end{equation}
where $B\succeq 0$ stands for: $B$ is a symmetric, positive
semidefinite matrix. For a survey on semidefinite programming, we
refer to \cite{VandenbergheBoyd}. The dual program gives another
formulation for $\vartheta(G)$ (there is no duality gap here because
the identity matrix is a strictly feasible solution of \eqref{theta}
so the Slater condition is fulfilled):
\begin{equation}\label{theta dual}
\begin{array}{ll}
\vartheta(G)=\inf \Big\{ \displaystyle\  t &:\ B\in
\R^{V\times V},\ B\succeq 0,\\
&\displaystyle B(x,x) = t-1,\\
&B(x,y)=  -1\quad xy \notin E\Big\}
\end{array}
\end{equation}
From \eqref{theta dual} one can easily derive that, if $G\to G'$, then
$\vartheta(\overline{G})\leq \vartheta(\overline{G'})$. Indeed, if
$B'$ is an optimal solution of the dual program defining 
$\vartheta(\overline{G'})$, then the matrix $B$ defined by
$B(x,y):=B'(f(x),f(y))$ is feasible for $\vartheta(\overline{G})$.

The circular complete graphs $K_{k/d}$ have the property that
$K_{k/d}\to K_{k'/d'}$ if and only if $k/d\leq k'/d'$ (see
\cite{Bondy-Hell}). Thus the theta number
$\vartheta(\overline{K_{k/d}})$ only depends on the quotient $k/d$,
and we later conveniently assume that $k$ and $d$ are coprime.

From the definition, it follows that
\begin{equation*}
  \omega(G)\leq \omega_c(G)\leq \chi_c(G)\leq \chi(G).
\end{equation*}
Moreover, $\omega(G)=\lfloor \omega_c(G)\rfloor$,
$\chi(G)=\lceil \chi_c(G)\rceil$, and $\omega_c(G)$ and $\chi_c(G)$
are attained for pairs $(k,d)$ such that $k\geq 2d$ and $k\leq |V|$ (see \cite{Bondy-Hell}, \cite{Zhu}).

If $G$ is a  circular perfect graph, let $k,d$ be such that
$\gcd(k,d)=1$ and $\omega_c(G)=\chi_c(G)=k/d$. Because 
$G$ and $K_{k/d}$ are homomorphically equivalent,
$\vartheta(\overline{G})=\vartheta(\overline{K_{k/d}})$.
Summarizing, we have

\begin{proposition}\label{prop}
Let $G$ be a circular perfect graph with $n$ vertices. Then, 
\begin{enumerate}
\item[1.] $\omega_c(G)=\chi_c(G)=k/d$ for some $(k,d)$ such that $k\geq 2d$,
$k\leq n$, and $\gcd(k,d)=1$.  
\item[2.] $\vartheta(\overline{G})=\vartheta(\overline{K_{k/d}})$.
\end{enumerate}
\end{proposition}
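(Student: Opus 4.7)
The plan is to read Proposition \ref{prop} as a bookkeeping statement that assembles three ingredients already present in the preceding text: (i) the definition of a circular perfect graph, (ii) the theorem of Bondy--Hell and Zhu ensuring that $\omega_c$ and $\chi_c$ are attained at pairs $(k,d)$ with $k\geq 2d$ and $k\leq |V|$, and (iii) the monotonicity property $G\to G' \Rightarrow \vartheta(\overline{G})\leq \vartheta(\overline{G'})$ that was derived right after \eqref{theta dual}.

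For part (1), I would first use circular perfectness to set $r:=\omega_c(G)=\chi_c(G)$. Invoking the Bondy--Hell/Zhu result, I obtain a representative $(k_0,d_0)$ of $r$ with $k_0\geq 2d_0$ and $k_0\leq n$. To get the coprimality condition, set $e:=\gcd(k_0,d_0)$ and replace $(k_0,d_0)$ by $(k,d):=(k_0/e,d_0/e)$; this preserves the ratio, preserves $k\geq 2d$, and strengthens $k\leq n$. It is legitimate to replace the representative because, as noted in the preliminaries, $K_{k/d}$ and $K_{k_0/d_0}$ are homomorphically equivalent (they have the same ratio), and $\omega_c,\chi_c$ are invariants of the homomorphism equivalence class.

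For part (2), I would extract from $\omega_c(G)=k/d$ the homomorphism $K_{k/d}\to G$ and from $\chi_c(G)=k/d$ the homomorphism $G\to K_{k/d}$ (these are the very definitions of circular clique and circular chromatic numbers). Applying the monotonicity principle recalled above to each of the two homomorphisms yields the two-sided inequality
\begin{equation*}
\vartheta(\overline{K_{k/d}})\leq \vartheta(\overline{G})\leq \vartheta(\overline{K_{k/d}}),
\end{equation*}
which forces equality.

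There is no serious obstacle; the argument is pure definition-chasing. The only thing that needs a moment of care is the passage to $\gcd(k,d)=1$ in part (1): one has to check that dividing $(k_0,d_0)$ by their gcd still lands in the range $k\geq 2d$, $k\leq n$, which it does automatically, and that this does not spoil the identification of $\vartheta(\overline{G})$ with $\vartheta(\overline{K_{k/d}})$, which is handled by the ratio-invariance of $\vartheta(\overline{K_{k/d}})$ emphasized just before the proposition.
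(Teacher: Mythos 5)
Your proposal is correct and matches the paper's own argument: the paper likewise obtains part (1) from the Bondy--Hell/Zhu attainment result (passing to the coprime representative, which the paper simply assumes), and obtains part (2) by observing that circular perfectness makes $G$ and $K_{k/d}$ homomorphically equivalent and then applying the monotonicity of $\vartheta(\overline{\,\cdot\,})$ under homomorphisms derived from \eqref{theta dual}. Your write-up is a slightly more explicit unpacking of the same definition-chasing the paper compresses into two sentences.
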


\section{An explicit formula for the theta number of circular complete
  graphs}\label{explicit formula}
In this section we prove Theorem \ref{Theorem 1}. 
We start with an overview of our proof:
first of all we show that $\vartheta(\overline{K_{k/d}})$ is the
optimal value of a linear program (Proposition \ref{Proposition linear
program}). This step is a standard
simplification of a semidefinite program using its symmetries. In a
second step, a candidate for  an
optimal solution of the resulting  linear program is defined
(Definition \ref{Def opti}) as the unique solution
of a certain linear system. We give an interpretation of this element, in terms of
the coefficients of Lagrange interpolation polynomials on the basis of
Chebyshev polynomials. Then, playing with the dual linear program,
it is easy to prove that this element, {\em if feasible}, is
indeed optimal (Lemma \ref{Lemma opti}).
The last step, which is also the most technical, amounts to prove that
this elements is indeed feasible, i.e. essentially that its coordinates are  non negative
(Lemma \ref{Lemma non neg}). To that end, we  boil down to prove
that a certain polynomial $L_0(y)$  has non negative coefficients when expanded as a linear
combination of the Chebyshev polynomials (Lemma \ref{Lemma non neg bis}).

\subsection{A linear program defining the theta number}
The vertex set  of $G=\overline{K_{k/d}}$ can be identified with the
additive group $\Z/k\Z$, and the additive action of this group defines
automorphisms of this graph. This action allows to transform the
semidefinite program \eqref{theta} into a linear program, as follows:

\begin{proposition}\label{Proposition linear program}
Let $k_0:=\lfloor k/2\rfloor$. We have:
\begin{equation}\label{LP}
\begin{array}{ll}
\vartheta(\overline{K_{k/d}})=\max \Big\{ \  kf_0 \ :
\ & f_j \geq 0, \quad \displaystyle \sum_{j=0}^{k_0} f_j=1,\\
& \displaystyle \sum_{j=0}^{k_0} f_j\cos\Big(\frac{2j\ell\pi}{k}\Big)=0,\quad 1\leq
\ell\leq d-1 \ \Big\}\\
\end{array}
\end{equation}
and also:
\begin{equation}\label{DLP}
\begin{array}{ll}
\vartheta(\overline{K_{k/d}})=\min \Big\{ \  kg_0 \ : & \displaystyle \sum_{\ell=0}^{d-1} g_\ell\geq 1,\\
& \displaystyle \sum_{\ell=0}^{d-1} g_\ell\cos\Big(\frac{2\ell j\pi}{k}\Big)\geq 0,\quad 1\leq
j\leq k_0 \ \Big\}\\
\end{array}
\end{equation}
\end{proposition}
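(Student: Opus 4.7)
The plan is to exploit the vertex-transitive action of the cyclic group $\Z/k\Z$ on $G=\overline{K_{k/d}}$ to simplify the SDP \eqref{theta} to an LP indexed by the eigenvalues of a circulant matrix. The key observation is that translation $x\mapsto x+1$ is an automorphism of $G$, so if $B$ is feasible for \eqref{theta} with value $v$, then averaging the conjugates of $B$ under the full cyclic group produces a feasible solution $B^{\mathrm{sym}}$ with the same value and which is invariant under translations; that is, $B^{\mathrm{sym}}(x,y)=b(y-x)$ for some function $b\colon\Z/k\Z\to\R$ with $b(-j)=b(j)$ (symmetry of $B$). Thus the optimum of \eqref{theta} is attained on circulant matrices.

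Next I would diagonalize circulants via the discrete Fourier transform. The eigenvalues of $B(x,y)=b(y-x)$ are
\[
\widehat{b}(\ell)=\sum_{j\in\Z/k\Z} b(j)\,e^{2\pi i j\ell/k},\qquad \ell=0,1,\dots,k-1,
\]
and because $b(-j)=b(j)$ they are real and satisfy $\widehat{b}(\ell)=\widehat{b}(k-\ell)$. The PSD condition $B\succeq 0$ becomes the nonnegativity of these eigenvalues. Introducing the symmetrized variables $f_0:=\widehat{b}(0)$, $f_j:=\widehat{b}(j)+\widehat{b}(k-j)$ for $1\le j<k/2$, and (when $k$ is even) $f_{k/2}:=\widehat{b}(k/2)$, I would rewrite every ingredient of \eqref{theta} in terms of the $f_j$ with $0\le j\le k_0$.

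The translation is now routine Fourier inversion. The trace constraint $\sum_x B(x,x)=kb(0)=1$ is equivalent, by Plancherel/Fourier inversion, to $\sum_{j=0}^{k_0}f_j=1$; the objective $\sum_{x,y}B(x,y)=k\,\widehat{b}(0)=kf_0$ is what we maximize; and the adjacency constraints $B(x,y)=0$ for $xy\in E$, i.e. $b(\ell)=0$ for $1\le\ell\le d-1$ (using $b(\ell)=b(k-\ell)$), become, after inverse DFT and using again the symmetry $\widehat{b}(j)=\widehat{b}(k-j)$,
\[
\sum_{j=0}^{k_0}f_j\cos\!\Big(\frac{2j\ell\pi}{k}\Big)=0,\qquad 1\le\ell\le d-1.
\]
Combining these pieces yields the primal LP \eqref{LP}. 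For the dual \eqref{DLP}, I would simply apply LP duality: introduce a dual variable $t$ for the equality $\sum f_j=1$ and dual variables $k g_\ell$ for the equalities indexed by $\ell=1,\dots,d-1$, obtaining as dual constraints $t+\sum_{\ell=1}^{d-1}kg_\ell\cos(2j\ell\pi/k)\ge k\cdot[j=0]$. Setting $g_0:=t/k$ and absorbing it into the sum recovers exactly \eqref{DLP}. Strong duality (and the absence of a duality gap) follows either directly—since both programs are manifestly feasible and bounded—or by invoking the Slater condition already noted in the paper for \eqref{theta}.

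The only non-trivial point is the bookkeeping of Fourier pairs and the parity of $k$: one has to verify that the doubling $f_j=\widehat{b}(j)+\widehat{b}(k-j)$ is consistent for both $k$ even and $k$ odd, and that the cosine constraint really collapses the conjugate Fourier modes correctly (and analogously for the inverse transform used on the edge constraints). These are straightforward but must be done carefully; I do not expect any conceptual obstacle beyond this accounting.
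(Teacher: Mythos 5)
Your proposal is correct and follows essentially the same route as the paper: average the SDP variable over the cyclic group to get a circulant matrix, pass to Fourier/eigenvalue coordinates where PSD becomes sign constraints, fold conjugate pairs into the $f_j$, and read off the constraints; the paper then simply asserts that \eqref{DLP} is the LP dual of \eqref{LP}, which your explicit duality computation verifies. The only cosmetic difference is normalization: you work directly with eigenvalues $\widehat b(\ell)$ so no final rescaling is needed, whereas the paper first works with the Fourier \emph{coefficients} of $F$ (which differ by a factor of $k$) and inserts a final change of variables $f_j \mapsto f_j/k$.
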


\begin{proof} Taking the average over the translations by the elements of
  $\Z/k\Z$, one constructs from a matrix  $B$ which is  optimal for
  \eqref{theta}, another optimal matrix which is translation
  invariant, i.e. which satisfies $B(x+z,y+z)=B(x,y)$ for all $x,y,z\in \Z/k\Z$. Thus
  one can restrict in \eqref{theta} to the matrices $B$ which are
  translation invariant. In other words, we can assume that
  $B(x,y)=F(x-y)$ for some $F:\Z/k\Z \to \R$. 
Then we can use the Fourier transform over $\Z/k\Z$ to express $F$ as 
\begin{equation*}
F(z)=\sum_{j=0}^{k-1} f_j e^{2ij z\pi/k}.
\end{equation*}
Then  $B\succeq 0$ if and only if $f_j=f_{k-j}$ and $f_j\geq 0$ for
all $j=0,\dots, k-1$. After a change from $f_j$ to $2f_j$ for $j\neq
0,k/2$, we can rewrite
\begin{equation*}
B(x,y)=\sum_{j=0}^{k_0} f_j \cos\Big(\frac{2j(x-y)\pi}{k}\Big).
\end{equation*}
Then, it remains to transfer to $(f_0,\dots,f_{k_0})$ the
constraints on $B$ that stand in \eqref{theta}. We have $\sum_{(x,y)\in (\Z/k\Z)^2} B(x,y)=k^2f_0$, and 
$\sum_{x\in \Z/k\Z} B(x,x)=k\sum_{j=0}^{k_0} f_j$. The edges
of $K_{k/d}$ are the pairs $(x,y)$ with $1\leq |x-y|\leq d-1$ so the
condition that $B(x,y)=0$ for all edges $(x,y)$ translates to 
\begin{equation*}
\sum_{j=0}^{k_0} f_j\cos\Big(\frac{2j\ell\pi}{k}\Big)=0,\quad 1\leq
\ell\leq d-1.
\end{equation*}
Changing $f_j$ to $f_j/k$ leads to \eqref{LP}. The linear program  \eqref{DLP} is  the dual
formulation of \eqref{LP}.
\end{proof}

\subsection{A candidate for an optimal solution of \eqref{LP}}
In order to understand the construction of this solution, it is worth
to take a look at the case when $d$ divides $k$. Indeed, in this case,
the  system of linear equations
\begin{equation*}
\sum_{j=0}^{k_0} f_j\cos\Big(\frac{2j\ell\pi}{k}\Big)=\delta_{0,l},\quad
0\leq \ell\leq d-1 
\end{equation*}
which is equivalent to
\begin{equation*}
\sum_{j=0}^{k-1} f_j'e^{2ij\ell\pi/k}=\delta_{0,l},\quad
0\leq \ell\leq d-1
\end{equation*}
where $f_j'=f_{k-j}'=f_j/2$ for $j\neq 0,k/2$, otherwise $f_j'=f_j$,
has an obvious solution $f=(f_0',\dots,f_{k-1}')$ defined as follows:
take $f_j=1/d$ for the indices $j$ which are multiples of $k/d$,
i.e. for $j=n k/d$, $n=0,\dots,d-1$. Take
$f_j=0$ for other indices. Then $f$ has exactly $d$ non zero
coefficients, is feasible because $f_j\geq 0$, and its objective value
equals $k/d$, which is also the optimal value of the linear program. 

In the case when $\gcd(k,d)=1$, none of the rational numbers  $nk/d$, for $n=0,\dots,d-1$,  are
integers.  Instead, we choose indices which are as close as possible,
namely we choose the indices of the form $\lfloor \frac{n k}{d}\rfloor$
for 
$0\leq n\leq d-1$ and set all other coefficients to zero. Then, the
$d$-tuple of coefficients which are not set to zero, satisfies  a 
linear system with  $d$ equations, and this linear system has a unique
solution. We shall prove
that in this way an optimal solution of \eqref{LP} is obtained.

Now we introduce some additional notations. The Chebyshev polynomials (\cite{Szego}), denoted
$(T_\ell)_{\ell\geq 0}$ are defined by the characteristic property:
$T_\ell(\cos(\theta))=\cos(\ell\theta)$. They can be iteratively computed by the
relation
$T_{\ell+1}(x)=2xT_\ell(x)-T_{\ell-1}(x)$ and the first terms $T_0=1$,
$T_1=x$. These polynomials are orthogonal  for the measure
$dx/\sqrt{1-x^2}$ supported on the interval $[-1,1]$. 

The numbers $a_n$, $0\leq n\leq d-1$, introduced in Theorem
\ref{Theorem 1}, come into play now. Recall that 
\begin{equation*}
a_n=\cos\Big(\Big\lfloor \frac{n k}{d} \Big\rfloor
\frac{2\pi}{k}\Big).
\end{equation*}
We remark that  the coefficients in the linear constraints of \eqref{LP}
associated  to the indices $\lfloor \frac{n k}{d}\rfloor$  are precisely equal to $T_{\ell}(a_n)$. 

We assume for the rest of this section that $\gcd(k,d)=1$. Then the real numbers $a_n$ are pairwise
distinct. We introduce the Lagrange polynomials (\cite{Szego})
associated to $(a_0,\dots, a_{d-1})$:
\begin{equation}\label{Lk}
L_n(y):= \prod_{\substack{s=0\\s\neq n}}^{d-1}\Big(\frac {y-a_s}{a_n-a_s}\Big).
\end{equation}
Now we have two basis for the space of polynomials of degree at most
equal to $d-1$: the Chebyshev basis $\{T_0,\dots,T_{d-1}\}$ and the
Lagrange basis $\{L_0,\dots,L_{d-1}\}$. We introduce the two $d\times d$ matrices
$T=(\tau_{\ell,n})$
and $L=(\lambda_{n,\ell})$ such that
\begin{equation}\label{def tau}
T_\ell(y)=\tau_{\ell,0}L_0(y)+\tau_{\ell,1}L_{1}(y)+\dots+\tau_{\ell,d-1}L_{d-1}(y)
\quad 0\leq \ell\leq d-1
\end{equation}
and 
\begin{equation}\label{def lambda}
L_n(y)=\lambda_{n,0}T_0(y)+\lambda_{n,1}T_{1}(y)+\dots+\lambda_{n,d-1}T_{d-1}(y)\quad
0\leq n\leq d-1.
\end{equation}
Obviously we have 
\begin{equation}
\tau_{\ell,n}=T_\ell(a_n)
\end{equation}
and
\begin{equation*}
TL=LT=I_d.
\end{equation*}

In particular, the $d$-tuple $(\lambda_{0,0},\lambda_{1,0},\dots ,
\lambda_{d-1,0})$ satisfies the equations:
\begin{equation}\label{eq lambda}
\sum_{n=0}^{d-1} \lambda_{n,0} T_{\ell}(a_n)  =\delta_{\ell,0},\quad
0\leq \ell\leq d-1.
\end{equation}

Now we can define our candidate for an optimal solution of \eqref{LP}:

\begin{definition}\label{Def opti}
With the above notations, let $f^*=(f_0^*,\dots, f_{k_0}^*)$ be defined by:
\begin{equation*}
\begin{cases}
f_j^*=\lambda_{n,0} &\text{if } j=\pm \big\lfloor \frac{n k}{d} \big\rfloor \mod k\\
f_j^*=0   & \text{otherwise}.
\end{cases}
\end{equation*}
\end{definition}

It remains to prove that $f^*$ is indeed optimal for \eqref{LP}. It
will result from the two following lemmas:

\begin{lemma}\label{Lemma non neg}
For all $j$, $0\leq j\leq k_0$,  $f_j^*\geq 0$.
\end{lemma}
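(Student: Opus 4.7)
The lemma asserts that $f_j^* \geq 0$ for every $j \in \{0, \ldots, k_0\}$. By Definition \ref{Def opti}, each entry of $f^*$ is either $0$ or equals some $\lambda_{n,0}$ with $n \in \{0, 1, \ldots, d-1\}$; since $\gcd(k,d) = 1$, the indices $\pm \lfloor nk/d \rfloor \bmod k$ project to pairwise distinct values in $\{0, \ldots, k_0\}$ (a short case analysis on $\lfloor nk/d\rfloor + \lfloor n'k/d\rfloor = k$ using $k \geq 2d$), so $f^*$ is well-defined, and the claim reduces to showing $\lambda_{n,0} \geq 0$ for each $n$.

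My first step is to derive a tractable expression for $\lambda_{n,0}$ by exploiting the discrete Chebyshev orthogonality at the $d$-th roots of unity. Setting $c_m = \cos(2m\pi/d)$, one has
$$\sum_{m=0}^{d-1} T_\ell(c_m) = d \cdot \mathbf{1}\bigl[\ell \equiv 0 \pmod{d}\bigr], \qquad 0 \leq \ell \leq d-1.$$
Applied to the Chebyshev expansion $L_n = \sum_\ell \lambda_{n,\ell} T_\ell$, this yields
$$\lambda_{n,0} = \frac{1}{d}\sum_{m=0}^{d-1} L_n(c_m).$$
For $n = 0$ this matches \eqref{formula for theta} up to the factor $k/d$, providing a useful sanity check.

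Following the authors' sketch, the next step is to reduce to Lemma \ref{Lemma non neg bis}, which states that the Chebyshev coefficients of $L_0$ are all nonnegative. A natural tool is the product identity
$$L_n(y) = L_0(y) \cdot \frac{y-1}{y-a_n} \cdot \frac{P'(1)}{P'(a_n)}, \qquad P(y) := \prod_{s=0}^{d-1}(y-a_s),$$
which rewrites $\sum_m L_n(c_m)$ as a weighted sum involving $L_0$ at the $c_m$'s. Numerical computation in small cases ($k=7,8,11$, $d=3$) suggests the particularly clean relation $\lambda_{n,0} = \lambda_{0,\, nk \bmod d}$, which would reduce Lemma \ref{Lemma non neg} to Lemma \ref{Lemma non neg bis} via a permutation of indices.

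The main obstacle is establishing this symmetry rigorously: since $L = T^{-1}$ is not symmetric in general, linking the zeroth column of $L$ (the object of Lemma \ref{Lemma non neg}) to its zeroth row (the object of Lemma \ref{Lemma non neg bis}) requires exploiting the specific arithmetic of the nodes $a_n = \cos(\lfloor nk/d\rfloor \cdot 2\pi/k)$. The authors themselves signal that the deeper technical content lies in Lemma \ref{Lemma non neg bis}, so the proof of Lemma \ref{Lemma non neg} should consist primarily of this reduction step, with the heavy lifting deferred.
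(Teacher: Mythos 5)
Your outline matches the paper's structure exactly: reduce $f_j^*\geq 0$ to $\lambda_{n,0}\geq 0$, then establish a permutation identity $\lambda_{n,0}=\lambda_{0,\sigma(n)}$ and invoke Lemma~\ref{Lemma non neg bis}. Your numerically observed relation $\lambda_{n,0}=\lambda_{0,\,nk\bmod d}$ is indeed the correct one, but you explicitly leave its proof as an open obstacle, and that permutation step is precisely the substance of the paper's Lemma~\ref{Lemma permut}; so the proposal has a genuine gap there.

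The missing argument is a hidden symmetry of the matrix $T$. Since $\gcd(k,d)=1$, pick $v$ with $1\leq v\leq d-1$ and $t\geq 0$ such that $kv=1+td$. Using that $a_m$ depends only on $m\bmod d$, set $n'=vn\bmod d$; then $a_{n'}=\cos\bigl(\lfloor vnk/d\rfloor\,2\pi/k\bigr)$, and since $vnk/d=n/d+nt$ with $0\leq n/d<1$ this equals $\cos(2nt\pi/k)=T_n(x)$ with $x:=\cos(2t\pi/k)$. Reindexing the nodes by $n\mapsto vn\bmod d$ (which fixes $0$) therefore turns the entries of $T$ into $T_\ell\bigl(T_n(x)\bigr)=T_{\ell n}(x)$, manifestly symmetric in $\ell$ and $n$, so the reindexed $L=T^{-1}$ is symmetric. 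Because $a_0=1$ is unmoved, the polynomial $L_0$ and hence the row $(\lambda_{0,\ell})$ are unchanged under this reindexing, whereas the column $(\lambda_{n,0})$ is permuted by $n\mapsto vn\bmod d$. Equating row and column by symmetry gives $\lambda_{vn\bmod d,\,0}=\lambda_{0,n}$, i.e.\ $\lambda_{n,0}=\lambda_{0,\,nk\bmod d}$ since $v^{-1}\equiv k\pmod d$. Your two tools --- the discrete orthogonality formula for $\lambda_{n,0}$ and the product identity relating $L_n$ to $L_0$ --- are both correct, but neither produces this conclusion on its own, because they do not exploit the multiplicative structure $a_{vn\bmod d}=T_n(x)$ of the nodes, which is what makes the reindexed $T$ a Hankel-type symmetric matrix $(T_{\ell n}(x))_{\ell,n}$.
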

We postpone the proof of Lemma \ref{Lemma non neg} to the next subsection.
\begin{lemma}\label{Lemma opti}
$f^*$ is an optimal solution of \eqref{LP}.
\end{lemma}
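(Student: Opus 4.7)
The approach is weak LP duality: pair $f^*$ with a dual-feasible solution $g^*$ of \eqref{DLP} achieving the same objective. By Lemma \ref{Lemma non neg} combined with the defining equations \eqref{eq lambda}, $f^*$ is primal-feasible for \eqref{LP}, with objective $kf_0^* = k\lambda_{0,0}$ (note that $f_0^* = \lambda_{0,0}$ arises from the $n=0$ case of Definition \ref{Def opti}, since $a_0 = 1$ forces $j = 0$).

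The dual candidate I would take is $g_\ell^* := \lambda_{0,\ell}$ for $0 \leq \ell \leq d-1$, so that the associated polynomial $Q^*(y) := \sum_{\ell=0}^{d-1} g_\ell^* T_\ell(y)$ coincides with the Lagrange polynomial $L_0(y)$. This choice is forced by complementary slackness: the $d$ nonzero entries of $f^*$ impose $d$ tight dual constraints at $y = 1, a_1, \ldots, a_{d-1}$, which uniquely single out $Q^* = L_0$ among polynomials of degree at most $d-1$. The objective match $kg_0^* = k\lambda_{0,0}$ is immediate, and the normalization $\sum_{\ell=0}^{d-1} g_\ell^* = L_0(1) = L_0(a_0) = 1$ follows from $a_0 = 1$ and the interpolation property of $L_0$.

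The only non-routine step is to verify the inequalities $L_0(\cos(2j\pi/k)) \geq 0$ for $1 \leq j \leq k_0$. These are trivially satisfied at those indices $j$ for which $\cos(2j\pi/k) = a_n$ for some $n \geq 1$, since $L_0(a_n) = 0$ by construction. At the remaining indices, writing $L_0 = \prod_{s=1}^{d-1}(y - a_s)/\prod_{s=1}^{d-1}(1 - a_s)$ with positive denominator, the sign of $L_0(\cos(2j\pi/k))$ equals $(-1)^{N(j)}$ where $N(j) := |\{s \in \{1, \ldots, d-1\} : a_s > \cos(2j\pi/k)\}|$; equivalently, $N(j)$ counts the zeros of $L_0$ lying strictly to the right of $\cos(2j\pi/k)$. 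I would verify that $N(j)$ is always even at non-sampled $j$ by a combinatorial analysis of the positions of $\lfloor nk/d\rfloor \bmod k$ in $\{0, 1, \ldots, k-1\}$ and their projection $j \mapsto \min(j, k-j)$ onto $\{0, 1, \ldots, k_0\}$.

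Once dual-feasibility of $g^*$ is established, weak LP duality applied to \eqref{LP} and \eqref{DLP} yields $kf_0 \leq kg_0^* = k\lambda_{0,0}$ for every primal-feasible $f$; combined with $kf_0^* = k\lambda_{0,0}$, this proves that $f^*$ is optimal. The main obstacle in executing this plan is the parity/sign verification at the non-sampled evaluation points; the remaining steps are standard LP-duality manipulations, justifying the claim in the outline that this step is ``easy'' once feasibility is in hand.
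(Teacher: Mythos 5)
Your outline follows the paper's proof exactly in structure: primal feasibility of $f^*$ via Lemma \ref{Lemma non neg} and \eqref{eq lambda}, the dual candidate $g^*_\ell=\lambda_{0,\ell}$ whose associated polynomial is $L_0$, reduction of dual feasibility to the sign condition $L_0(\cos(2j\pi/k))\geq \delta_{j,0}$, and conclusion by weak LP duality. All of that is correct and matches the paper. However, the one step you yourself flag as ``non-routine'' --- showing that the number $N(j)$ of roots of $L_0$ lying strictly above a non-sampled $\cos(2j\pi/k)$ is even --- is precisely the content of this lemma, and you leave it as an announced ``I would verify \dots by a combinatorial analysis'' without supplying any argument for why the parity must work out. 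That is a genuine gap, not a routine verification.

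The paper closes it by observing that the roots of $L_0$ pair up as cosines of \emph{adjacent} multiples of $2\pi/k$. Since $\gcd(k,d)=1$, the number $nk/d$ is never an integer for $1\leq n\leq d-1$, so $\lfloor nk/d\rfloor+\lfloor (d-n)k/d\rfloor=k-1$; hence
\begin{equation*}
a_{d-n}=\cos\Bigl((k-1-\lfloor nk/d\rfloor)\tfrac{2\pi}{k}\Bigr)=\cos\Bigl((\lfloor nk/d\rfloor+1)\tfrac{2\pi}{k}\Bigr),
\end{equation*}
so $a_n$ and $a_{d-n}$ are consecutive entries of the strictly decreasing list $\cos(2\pi/k)>\cos(4\pi/k)>\dots>\cos(2k_0\pi/k)$. (When $d$ is even the singleton $a_{d/2}$ corresponds to $\lfloor k/2\rfloor=k_0$ with $k$ odd, so it is the minimum of that list and never lies strictly above a non-sampled value.) For any $j$ outside the sampled set, $\cos(2j\pi/k)$ falls outside every such consecutive pair, so $N(j)$ is a sum of $0$'s and $2$'s and is therefore even, giving $L_0(\cos(2j\pi/k))\geq 0$. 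Inserting this argument makes your proof complete and essentially identical to the paper's.
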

\begin{proof}
Lemma \ref{Lemma non neg}, joined with \eqref{eq lambda} shows that
$f^*$ is feasible. Thus we can derive the inequality:
\begin{equation*}
k\lambda_{0,0} \leq \vartheta(\overline{K_{k/d}}).
\end{equation*}
Now we claim that the element
$g^*=(\lambda_{0,0},\lambda_{0,1},\dots,\lambda_{0,d-1})$
is a feasible solution of the dual program \eqref{DLP}. 
For that we need to prove that
\begin{equation*}
\sum_{\ell=0}^{d-1} \lambda_{0,\ell}
\cos\Big(\frac{2\ell j\pi}{k}\Big)\geq \delta_{j,0},\quad
0\leq j\leq k_0
\end{equation*}
which can be rewritten as 
\begin{equation*}
\sum_{\ell=0}^{d-1} \lambda_{0,\ell}
T_{\ell}\Big(\cos\Big(\frac{2 j\pi}{k}\Big)\Big)\geq \delta_{j,0},\quad
0\leq j\leq k_0
\end{equation*}
or, taking account of \eqref{def lambda},
\begin{equation}\label{condition}
L_0\Big(\cos\Big(\frac{2 j\pi}{k}\Big)\Big)\geq \delta_{j,0},\quad
0\leq j\leq k_0.
\end{equation}
For $j=0$, \eqref{condition} holds because $L_0(1)=L_0(a_0)=1$. For $j\geq 1$, we take a
look at the position of
$\cos(2j\pi/k)$ with respect to the roots $a_1,\dots, a_{d-1}$ of
$L_0$.
Indeed, these roots  belong to the set
$\{\cos(2j\pi/k), j=1,\dots, k-1\}$, but it should be noticed that
they
 go in successive pairs. More precisely, $a_n$ and $a_{d-n}$ are equal
 to the first coordinate of  neighbor vertices of the regular
$k$-gone. So either 
$\cos(2j\pi/k)$ is equal to one of the $a_n$, or there is an even
number of roots $a_n$, $n\geq 1$, which are greater than
$\cos(2j\pi/k)$.  In the later case, $L_0(\cos(2j\pi/k))$ and $L_0(1)$
have the same sign. Since $L_0(1)=1$, we are done.

Since $g^*$ is a feasible solution of \eqref{DLP}, its objective
value, which is equal to $k\lambda_{0,0}$, upper  bounds
$\vartheta(\overline{K_{k/d}})$.
So we conclude that 
\begin{equation*}
\vartheta(\overline{K_{k/d}})=k\lambda_{0,0}
\end{equation*}
and that $f^*$ is an optimal solution of \eqref{LP}.
\end{proof}

\subsection{The proof of Lemma \ref{Lemma non neg}}
We want to prove that  $\lambda_{n,0}\geq 0$ for all $n=0,\dots,
d-1$. We first prove that this condition is equivalent to:
$\lambda_{0,\ell}\geq 0$ for all $0\leq \ell \leq d-1$.
\begin{lemma}\label{Lemma permut}
The tuples  $(\lambda_{n,0}, 0\leq n\leq d-1)$ and 
$(\lambda_{0,\ell}, 0\leq \ell \leq d-1)$ are equal up to a
permutation of their coordinates.
\end{lemma}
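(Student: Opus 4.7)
The plan is to exhibit an explicit bijection $\sigma$ of $\{0,1,\ldots,d-1\}$ such that $\lambda_{n,0} = \lambda_{0,\sigma(n)}$ for every $n$; this will come from a reparametrization of the nodes $a_n$ that turns the underlying matrix $T$ into a symmetric one.

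Euclidean division of $nk$ by $d$ reads $nk = d\lfloor nk/d\rfloor + r_n$ with $r_n := nk \bmod d \in \{0,1,\ldots,d-1\}$. Since $\gcd(k,d)=1$, the map $n\mapsto r_n$ is a bijection of $\{0,1,\ldots,d-1\}$ and $d$ is invertible modulo $k$. Setting $\mu := -d^{-1} \bmod k$, the identity above gives $d\lfloor nk/d\rfloor \equiv -r_n \pmod{k}$, hence $\lfloor nk/d\rfloor \equiv \mu r_n \pmod{k}$, and therefore $a_n = \cos(2\pi\mu r_n/k)$.

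Now define $b_j := \cos(2\pi\mu j/k)$ for $0\le j\le d-1$, so that $a_n = b_{r_n}$, and introduce the auxiliary $d\times d$ matrix $T'$ with entries $\tau'_{\ell,j} := T_\ell(b_j) = \cos(2\pi\mu\ell j/k)$. The crucial point is that $T'$ is manifestly symmetric, since its $(\ell,j)$ entry depends on $\ell$ and $j$ only through the product $\ell j$. Consequently $L' := (T')^{-1}$ is also symmetric.

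Since $T$ is obtained from $T'$ by permuting columns via $n\mapsto r_n$, one has $T = T'Q$ for the corresponding permutation matrix $Q$, and thus $L = Q^{\top}L'$, i.e.\ $\lambda_{n,\ell} = L'_{r_n,\ell}$. Taking $\ell=0$ and using $r_0=0$ gives $\lambda_{0,\ell} = L'_{0,\ell}$ on the one hand, and $\lambda_{n,0} = L'_{r_n,0} = L'_{0,r_n} = \lambda_{0,r_n}$ on the other (by symmetry of $L'$). Hence $\sigma(n):=r_n = nk\bmod d$ is the desired permutation. The only genuinely nontrivial step is spotting the reparametrization $a_n = \cos(2\pi\mu r_n/k)$; once it is in place, the symmetry of $T'$ makes the conclusion automatic and no further obstacle remains.
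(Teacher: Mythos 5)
Your proof is correct and takes essentially the same route as the paper: both exploit that, after reparametrizing the nodes via the permutation $n\mapsto nk\bmod d$ (the paper uses the inverse permutation $n\mapsto vn\bmod d$ with $kv\equiv 1\pmod d$), the evaluation matrix becomes $\bigl(\cos(2\pi\mu\ell j/k)\bigr)_{\ell,j}$, which is manifestly symmetric, forcing its inverse to be symmetric as well. The only cosmetic difference is that the paper phrases the symmetry via the Chebyshev composition identity $T_\ell(T_n(x))=T_{\ell n}(x)$ with $x=\cos(2t\pi/k)$, whereas you compute the cosines directly; the two are the same fact.
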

\begin{proof} 
It turns out that, up to a permutation of the $a_n$, the matrix $T$ is
symmetric. Since $\gcd(k,d)=1$, one can find  $v$, $1\leq v\leq d-1$, and
$t\geq 0$, such that $kv=1+td$. By definition,
\begin{equation*}
a_n:=\cos\Big(\Big\lfloor \frac{nk}{d} \Big\rfloor \frac{2\pi}{k}\Big)
\end{equation*}
only depends on $n \mod d$. Let us compute $a_{n'}$ where $n'=vn \mod d$.
Since $vnk/d=n/d+nt$, we have
\begin{equation*}
a_{n'}=\cos\Big(\Big\lfloor \frac{vnk}{d} \Big\rfloor \frac{2\pi}{k}\Big)
=\cos\Big( \frac{2nt\pi}{k} \Big).
\end{equation*}
If we set
\begin{equation}\label{x(k,d)}
x=x(k,d):=\cos\Big( \frac{2t\pi}{k}
\Big)=\cos\Big(\Big(\frac{v}{d}-\frac{1}{kd}\Big)2\pi\Big),
\end{equation}
we have
\begin{equation*}
a_{n'}=T_n(x).
\end{equation*}
If we reorder the $a_n$ according to the permutation
$n\mapsto vn \mod d$, which fixes $0$,
the coefficients of the corresponding  matrix $T$ are equal to:
\begin{equation*}
\tau_{\ell,n}=T_{\ell}(a_{n'})=T_\ell(T_n(x))=T_{\ell
  n}(x)=T_{n\ell}(x)=\tau_{n,\ell}.
\end{equation*}
Thus the new matrix $T$ is
symmetric. This reordering of the $a_n$,   permutes accordingly the
coordinates
of $(\lambda_{n,0}, 0\leq n\leq d-1)$. Also the matrix $L=T^{-1}$ has become
symmetric, so the permuted $\lambda_{n,0}$ are equal to
$\lambda_{0,n}$
(who have not changed in the procedure because the polynomial
$L_0(y)$ is not affected by the reordering of the $a_n$).
\end{proof}

The next lemma ends the proof of Lemma \ref{Lemma non neg}:

\begin{lemma}\label{Lemma non neg bis}
For all $0\leq \ell\leq d-1$, $\lambda_{0,\ell}\geq 0$.
\end{lemma}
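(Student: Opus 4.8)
The plan is to convert the statement into one about the ordinary coefficients of an auxiliary self-reciprocal polynomial, and then to prove positivity there.

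\emph{Step 1: a residue reformulation.} Write $a_n=\cos\beta_n$ with $\beta_n:=\frac{2\pi}{k}\lfloor\frac{nk}{d}\rfloor$. Since $1\le\lfloor\frac{nk}{d}\rfloor\le k-1$ for $1\le n\le d-1$ (using $k\ge 2d$), we have $a_n\in[-1,1)$, so $D:=\prod_{n=1}^{d-1}(2-2a_n)=\prod_{n=1}^{d-1}|1-e^{i\beta_n}|^{2}$ is a positive real. Substituting $y=\tfrac12(z+z^{-1})$ and using $\tfrac12(z+z^{-1})-\cos\beta=\frac{(z-e^{i\beta})(z-e^{-i\beta})}{2z}$ and $1-\cos\beta=\tfrac12|1-e^{i\beta}|^{2}$ gives
\begin{equation*}
L_0\!\left(\tfrac{z+z^{-1}}{2}\right)=\frac{R(z)}{z^{d-1}D},\qquad R(z):=\prod_{n=1}^{d-1}\bigl(z^{2}-2a_nz+1\bigr),
\end{equation*}
a self-reciprocal polynomial of degree $2d-2$ with $R(0)=1$. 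Plugging this into the Chebyshev-coefficient integral $\lambda_{0,\ell}=\frac2\pi\int_0^\pi L_0(\cos\theta)\cos(\ell\theta)\,d\theta$ (with the usual $\frac1\pi$ for $\ell=0$), changing variables to $z=e^{i\theta}$ on the unit circle and reading off the residue at $z=0$, one gets $\lambda_{0,\ell}=\frac2D r_{d-1-\ell}$ for $1\le\ell\le d-1$ and $\lambda_{0,0}=\frac1D r_{d-1}$, where $r_j$ is the coefficient of $z^j$ in $R$. Since $D>0$ and $r_j=r_{2d-2-j}$, Lemma \ref{Lemma non neg bis} is \emph{equivalent} to the assertion that all coefficients of $R(z)$ are non-negative. (The same bookkeeping yields $\lambda_{0,0}=\frac1d\sum_{n=0}^{d-1}L_0(c_n)$, which is how the closed form \eqref{formula for theta} is then read off.)

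\emph{Step 2: the limiting shape.} The roots of $R$ are the conjugate pairs $e^{\pm i\beta_n}$, $1\le n\le d-1$; they are $k$-th roots of unity, and because $\lfloor\frac{(d-n)k}{d}\rfloor=k-1-\lfloor\frac{nk}{d}\rfloor$ we have $\beta_{d-n}=2\pi-\frac{2\pi}{k}-\beta_n$, so the exponents $\lfloor\frac{nk}{d}\rfloor$ pair up and are regularly spread, consecutive gaps being $\lfloor k/d\rfloor$ or $\lceil k/d\rceil$. As $k\to\infty$ with $d$ fixed, $\beta_n\to\frac{2\pi n}{d}$ and
\begin{equation*}
R(z)\longrightarrow\prod_{n=1}^{d-1}(z-\omega^{n})(z-\omega^{-n})=\Bigl(\tfrac{z^{d}-1}{z-1}\Bigr)^{2}=(1+z+\cdots+z^{d-1})^{2},\qquad\omega=e^{2\pi i/d},
\end{equation*}
whose coefficients are all $\ge 1$. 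So the assertion is plausible; the real content is to control the deviation of $R$ from this limit \emph{uniformly} in $k$.

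\emph{Step 3: positivity of the $r_j$, and the main obstacle.} A crude perturbation bound is too lossy here — it carries a $2^{d}$-type constant and only settles exponentially large $k$ — so one must use the arithmetic of the $\lfloor\frac{nk}{d}\rfloor$ directly. Two handles are available: (i) every factor $z^{2}-2a_nz+1$ with $a_n\le 0$ (i.e.\ $\lfloor\frac{nk}{d}\rfloor\in[\frac k4,\frac{3k}4]$) already has non-negative coefficients, so only the product of the remaining ``extremal'' factors against the rest has to be controlled; (ii) those extremal factors occur in the symmetric pairs $\{n,d-n\}$ with $a_{d-n}=\cos(\beta_n+\frac{2\pi}{k})$ forced close to $a_n$, and $\{\beta_n\}$ is a regular $O(1/k)$-perturbation of $\{\frac{2\pi n}{d}\}$ thanks to $|\lfloor\frac{nk}{d}\rfloor-\frac{nk}{d}|<1$. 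Carrying the resulting coefficient comparison through — matching $R$ term by term against $(1+z+\cdots+z^{d-1})^{2}$ and showing each deviation stays below $1$ for \emph{every} $k\ge 2d$, in particular in the delicate regime $k$ close to $2d$ where $R$ is farthest from its limit — is the technical heart of the proof; this is the step I expect to be the main obstacle. (A viable alternative is an induction on $d$ along the Euclidean/Stern--Brocot structure of $k/d$: for $k\ge 3d$ the reduction $k\mapsto k-d$ has $\gcd(k-d,d)=1$, shifts each $\lfloor\frac{nk}{d}\rfloor$ by $n$, and moves each $a_n$ by $O(1/k^{2})$, leaving the base range $2d<k<3d$ to be treated by hand.)
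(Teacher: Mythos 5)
Your Step 1 is correct, and it is in substance the same reduction the paper makes: with $2y=Y+Y^{-1}$ the paper also rewrites $\prod_{s=1}^{d-1}(y-a_s)$ as a self-reciprocal polynomial in $Y$, and its coefficients $Q'_{\ell}(x)$ are exactly your $r_{d-1-\ell}$, so the lemma is indeed equivalent to the non-negativity of the coefficients of your $R(z)$. But that is where your argument stops. Steps 2 and 3 only record the limit $R\to(1+z+\dots+z^{d-1})^{2}$ and then explicitly defer the uniform coefficient comparison, which you yourself call the main obstacle; neither of the two sketched routes (term-by-term comparison with the limit shape, or induction along the Euclidean reduction $k\mapsto k-d$) is carried out. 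That comparison is the entire content of the lemma. Moreover the perturbative route is doubtful precisely in the regime you flag: for $k$ close to $2d$ each angle $\beta_n$ may differ from $2\pi n/d$ by almost $2\pi/k\approx\pi/d$, half the spacing of the limiting roots, and since the smallest limiting coefficient is $1$ you would need every coefficient of $R$ to deviate from its limit by less than $1$ — no such bound is attempted, and nothing in your sketch suggests how to get one uniformly in $k\geq 2d$. As it stands the proposal is a correct reformulation plus an unproven claim.

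For comparison, the paper closes this step exactly, with no estimates, using an ingredient your setup never exploits: by Lemma \ref{Lemma permut} the $a_n$ can be reordered so that $a_{n'}=T_n(x)$ with $x=\cos\theta$, $\theta=\big(\frac{v}{d}-\frac{1}{kd}\big)2\pi$, so the roots of your $R$ are the powers $X^{\pm s}$, $1\le s\le d-1$, of the single number $X=e^{i\theta}$. Multiplying by $(Y-1)$ makes the successive differences of the coefficients of $R$ appear as the coefficients $C_j(X)$ of $P(Y)=\prod_{s=-(d-1)}^{d-1}(Y-X^s)$, and the functional equation $P(XY)=X^{2d-1}\frac{Y-X^{-d}}{Y-X^{d-1}}P(Y)$ yields the closed recursion $C_j=C_{j-1}\,\frac{\sin((\frac j2 -d)\theta)}{\sin(\frac{j\theta}{2})}$; an elementary determination of the signs of these sines (both equal $(-1)^{\lfloor jv/d\rfloor}$) then shows $r_{d-1}\geq r_{d-2}\geq\dots\geq r_0=1$, so all coefficients are at least $1$. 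If you want to complete your write-up, this geometric-progression structure of the roots — not closeness of $R$ to its limit — is the missing idea.
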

\begin{proof}
Since $\prod_{s=1}^{d-1}(1-a_s)\geq 0$, we can replace $L_0(y)$ by 
\begin{equation*}
\prod_{s=1}^{d-1} (y-a_s)=\prod_{s=1}^{d-1} (y-T_s(x))
\end{equation*}
where $x$ is defined in  \eqref{x(k,d)}.
The right hand side becomes a polynomial in the variables $x$ and $y$,
depending only on $d$. This polynomial has an expansion in the
Chebyshev basis:
\begin{equation}\label{eq1}
\prod_{s=1}^{d-1} (y-T_s(x))=\sum_{\ell=0}^{d-1} Q_{\ell}(x)T_\ell(y).
\end{equation}
We introduce complex variables $X$ and $Y$, such that
$2x=X+1/X$ and $2y=Y+1/Y$.
Then, \eqref{eq1} becomes:
\begin{equation}\label{eq2}
\prod_{s=1}^{d-1} (Y-X^s)(Y-X^{-s})=Y^{d-1}\sum_{\ell=-(d-1)}^{d-1}
  Q_{\ell}'(x) Y^{\ell}
\end{equation}
where $Q'_0=2^{d-1}Q_0$ and, for $\ell=1,\dots, d-1$,
$Q'_{-\ell}=Q'_\ell=2^{d-2}Q_\ell$.
We want to prove that $Q'_{\ell}(x)\geq 0$ when $x$ is given by
\eqref{x(k,d)}.
To that end, we will prove that this sequence of numbers is decreasing:
\begin{equation}\label{eq3}
Q'_0(x)\geq Q'_1(x)\geq\dots \geq Q'_{d-1}(x)
\end{equation}
and since $Q'_{d-1}(x)=1$, we will be done. Now the idea is to
multiply the equation \eqref{eq2} by $(Y-1)$, so that the
successive differences $Q'_{\ell-1}(x)-Q'_{\ell}(x)$  appear in the
right hand side as the coefficients of $Y^\ell$.
We obtain,  setting $Q'_{-d}=Q'_d:=0$:
\begin{equation}\label{eq4}
\prod_{s=-(d-1)}^{d-1} (Y-X^s)=Y^{d-1} \sum_{\ell=-(d-1)}^{d} (Q'_{\ell-1}(x)- Q_{\ell}'(x))
Y^{\ell}.
\end{equation}
We let:
\begin{equation}\label{eq5}
P(Y):=\prod_{s=-(d-1)}^{d-1} (Y-X^s):=\sum_{j=0}^{2d-1} C_j(X)Y^j.
\end{equation}
We have:
\begin{align*}
P(XY) & = \prod_{j=-(d-1)}^{d-1}(XY-X^j)\\
 & = X^{2d-1}\prod_{j=-(d-1)}^{d-1}(Y-X^{j-1})\\
& = X^{2d-1}\frac{Y-X^{-d}}{Y-X^{d-1}} P(Y).
\end{align*}
This equation leads to:
\begin{equation*}
(Y-X^{d-1})\sum_{j=0}^{2d-1}
C_{j}(X)X^jY^{j}=(X^{2d-1}Y-X^{d-1})\sum_{j=0}^{2d-1} C_{j}(X) Y^{j}.
\end{equation*}
Comparing the coefficients of $Y^j$ in both sides, we obtain  the formula:
\begin{equation}\label{eq6} 
C_j(X) = C_{j-1}(X) \frac{X^{j-d}-X^d}{X^j-1}, \quad  1\leq  j\leq 2d-1.
\end{equation}
If  $X=e^{i\theta}$, we obtain in \eqref{eq6}
\begin{equation}\label{eq7}
 C_j(X) =
 C_{j-1}(X)\frac{\sin((\frac{j}{2}-d)\theta)}{\sin(\frac{j\theta}{2})},
 \quad 1\leq j\leq 2d-1.
\end{equation}
Thus, taking account of   \eqref{x(k,d)}, \eqref{eq4}, \eqref{eq5}
and \eqref{eq7}, the inequalities \eqref{eq3}
that we want to establish, are  equivalent to 
the non negativity of $\frac{\sin((\frac{j}{2}-d)\theta)}{\sin(\frac{j\theta}{2})}$ 
when $\theta = \big(\frac{v}{d}-\frac{1}{kd}\big) 2\pi$, $1\leq j\leq d-1$, and $k \geq 2d$.
Let us prove it now: let
\begin{equation*}
\begin{cases}
N=N(k,d):=\lfloor \frac{jv}{d}\rfloor\\
\varepsilon=\varepsilon(k,d) := \frac{jv}{d}-N.
\end{cases}
\end{equation*}
Since $v$ and  $d$ are coprime and $1\leq j < d$, we have
$\varepsilon \in \{\frac{1}{d},\frac{2}{d}, \ldots,
\frac{d-1}{d}\}$. We first study the sign of $\sin(\frac{j\theta}{2})$:
since $\frac{j\theta}{2} =\pi(N+\varepsilon-\frac{j}{kd})$, this
number belongs to $]N\pi,(N+1)\pi[$, which means that the
    sign of $\sin(\frac{j\theta}{2})$ is $(-1)^N$.

Now we determine the sign of $\sin((\frac{j}{2}-d)\theta)$ :
we have $(\frac{j}{2}-d)\theta
=\pi(N-2v+\varepsilon-\frac{j}{kd}+\frac{2}{k})$,
from which we obtain that $(\frac{j}{2}-d)\theta$ belongs to $](N-2v)\pi,(N-2v+1)\pi[$,
thus the sign of $\sin((\frac{j}{2}-d)\theta)$ equals $(-1)^{N+2v}$.

\end{proof}

\subsection{The end of the proof of Theorem \ref{Theorem 1}}

We have obtained an optimal  solution $f^*$  of \eqref{LP}, given in
Definition \ref{Def opti}, with objective value equal to
$k\lambda_{0,0}$. So we have 
\begin{equation}\label{eq8}
\vartheta(\overline{K_{k/d}})=k\lambda_{0,0}. 
\end{equation}
We recall that:
\begin{equation}\label{def lambda00}
L_0(y)=\lambda_{0,0}T_0(y)+\lambda_{0,1}T_{1}(y)+\dots+\lambda_{0,d-1}T_{d-1}(y).
\end{equation}
If we plug in \eqref{def lambda00} the value $y=c_n$ and sum up for
$n=0,\dots, d-1$, taking account of $T_0=1$ and 
$\sum_{n=0}^{d-1} T_j(c_n)=\sum_{n=0}^{d-1} \cos(2jn\pi/d)=0$, we
obtain the formula \eqref{formula for theta}.

\subsection{Other expressions for $\vartheta(\overline{K_{k/d}})$}

Alternatively, we can integrate \eqref{def lambda00} for the measure
$dy/\sqrt{1-y^2}$, for which the Chebyshev polynomials are orthogonal, leading to different expressions for 
$\vartheta(\overline{K_{k/d}})$:

\begin{theorem}\label{Theorem 2}
We have, with the notations of Theorem \ref{Theorem 1}:
\begin{align}\label{F1}
\vartheta(\overline{K_{k/d}})&= \frac{k}{\pi} \int_{-1}^1 L_0(y)
\frac{dy}{\sqrt{1-y^2}}\\\label{F2}
&= \frac{(-1)^{d-1}k}{\displaystyle \prod_{n=1}^{d-1}(1-a_n)} \sum_{j=0}^{\lfloor (d-1)/2
  \rfloor} \frac{1}{2^{2j}}\binom{2j}{j}
\sigma_{d-1-2j}(a_1,\dots,a_{d-1})
\end{align}
where $\sigma_0, \dots, \sigma_{d-1}$ denote the elementary symmetric
polynomials
in $d-1$ variables.
\end{theorem}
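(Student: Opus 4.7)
The plan is to execute the suggestion already given in the statement: integrate \eqref{def lambda00} against the orthogonality measure of the Chebyshev polynomials and use the explicit Lagrange form of $L_0$.

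First, I would start from \eqref{eq8}, namely $\vartheta(\overline{K_{k/d}})=k\lambda_{0,0}$, and recall the standard orthogonality
\begin{equation*}
\frac{1}{\pi}\int_{-1}^{1} T_\ell(y)\,\frac{dy}{\sqrt{1-y^2}}=\delta_{\ell,0},
\end{equation*}
which follows from the substitution $y=\cos\theta$. Integrating both sides of \eqref{def lambda00} against $dy/(\pi\sqrt{1-y^2})$ kills all terms except $\ell=0$ and yields $\lambda_{0,0}=\frac{1}{\pi}\int_{-1}^{1}L_0(y)\,dy/\sqrt{1-y^2}$. Multiplying by $k$ gives \eqref{F1}.

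For \eqref{F2}, I would use the Lagrange form \eqref{Lk} of $L_0$, noting that $a_0=\cos(0)=1$, so
\begin{equation*}
L_0(y)=\frac{1}{\prod_{s=1}^{d-1}(1-a_s)}\prod_{s=1}^{d-1}(y-a_s).
\end{equation*}
Expanding the numerator in terms of the elementary symmetric polynomials in $a_1,\dots,a_{d-1}$ gives
\begin{equation*}
\prod_{s=1}^{d-1}(y-a_s)=\sum_{j=0}^{d-1}(-1)^{d-1-j}\sigma_{d-1-j}(a_1,\dots,a_{d-1})\,y^j.
\end{equation*}
Plugging this into the integral from \eqref{F1}, it remains to compute the moments $m_j:=\frac{1}{\pi}\int_{-1}^{1} y^j\,dy/\sqrt{1-y^2}$. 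Setting $y=\cos\theta$ reduces $m_j$ to $\frac{1}{\pi}\int_0^\pi \cos^j\theta\,d\theta$, a classical integral which vanishes for $j$ odd and equals $\binom{2m}{m}/2^{2m}$ when $j=2m$ (Wallis/binomial identity).

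Inserting these values, only even powers of $y$ contribute, so the sum collapses to $j=2m$ with $0\le m\le\lfloor (d-1)/2\rfloor$; the sign becomes the uniform $(-1)^{d-1-2m}=(-1)^{d-1}$, which can be pulled out of the sum. Assembling the pieces and multiplying by $k$ produces exactly \eqref{F2}. There is no serious obstacle here: the only nonobvious step is the moment computation $m_{2m}=\binom{2m}{m}/2^{2m}$, which is standard.
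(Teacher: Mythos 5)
Your proposal is correct and follows essentially the same route as the paper: integrate \eqref{def lambda00} against the Chebyshev weight $dy/\sqrt{1-y^2}$ using orthogonality to get \eqref{F1}, then expand $L_0$ in monomials via the elementary symmetric polynomials and apply the Wallis moment formula $\frac{1}{\pi}\int_{-1}^1 y^{2m}\,dy/\sqrt{1-y^2}=\binom{2m}{m}/2^{2m}$ to obtain \eqref{F2}. You merely spell out the intermediate steps that the paper leaves terse.
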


\begin{proof}
Integrating \eqref{def lambda00}  for the measure $dy/\sqrt{1-y^2}$ over the
interval $[-1,1]$ leads to \eqref{F1} because the Chebyshev
polynomials $T_n$ satisfy:
\begin{equation*}
\frac{1}{\pi}\int_{-1}^1 T_n(y) \frac{dy}{\sqrt{1-y^2}}= \delta_{n,0}.
\end{equation*}
Then, \eqref{F2} is obtained from \eqref{F1} with the monomial
expansion of  $L_0(y)$ and the formula (ref ??)
\begin{equation*}
\frac{1}{\pi}\int_{-1}^1 y^j \frac{dy}{\sqrt{1-y^2}}=
\begin{cases}
0 & \text{if } j \text{ is odd}\\
\frac{1}{2^j}\binom{j}{j/2} &\text{otherwise }.
\end{cases}
\end{equation*}
\end{proof}

\begin{remark} The expression
\eqref{formula for theta} specializes,
when $d=2$ and $d=3$, to the expressions given respectively in
\cite{Lovasz} and \cite{Brimkov}. Indeed, 
in the case $d=2$, we have $c_0=1$,
  $c_1=-1$, and $a_1=-\cos(\pi/k)$. Replacing in \eqref{formula for
    theta}, we recover \eqref{theta cyclic}.

For $d=3$, we have $c_1=c_2=-1/2$, $a_1=\cos(\lfloor
\frac{k}{3}\rfloor\frac{2\pi}{k})$ and $a_2=\cos((\lfloor
\frac{k}{3}\rfloor+1) \frac{2\pi}{k})$. We obtain in \eqref{formula for
    theta}
\begin{align*}
\vartheta(\overline{K_{k/3}})&=\frac{k}{3}\Big( 1+\frac{(c_1-a_1)(c_1-a_2)}{(1-a_1)(1-a_2)}+\frac{(c_2-a_1)(c_2-a_2)}{(1-a_1)(1-a_2)}\Big)\\
&=\frac{k(1/2+a_1a_2)}{(1-a_1)(1-a_2)}
\end{align*}
which agrees with the expression \eqref{theta Brimkov} given in \cite[Theorem 1]{Brimkov}.
\end{remark}

\section{Separating the $\vartheta(\overline{K_{k/d}})$}\label{separating}

In this section, we prove Theorem \ref{poly
  time}. Jointly with Proposition \ref{prop}, and following the discussion in
the Introduction, it will be an immediate consequence of
the following theorem:

\begin{theorem}\label{Theorem 3}
There exists an absolute and effective constant $c$ such that for all
$N \in \N, k \leq N, k' \leq N, k\geq 2d,  k'\geq 2d'$ with $\gcd(k,d)=\gcd(k',d')=1$, and $k/d \not = k'/d'$,
$$\vert\vartheta(\overline{K_{k/d}})-\vartheta(\overline{K_{k'/d'}})\vert \geq \frac{1}{c^{N^5}}.$$
\end{theorem}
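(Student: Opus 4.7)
The plan is to realize $\vartheta(\overline{K_{k/d}})$ as an explicit algebraic number and apply a Liouville-type lower bound to the nonzero algebraic number $\alpha := \vartheta(\overline{K_{k/d}}) - \vartheta(\overline{K_{k'/d'}})$. By Theorem~\ref{Theorem 1}, every $a_n$ and $c_n$ is the cosine of a rational multiple of $\pi$, so $\vartheta(\overline{K_{k/d}})$ lies in the real cyclotomic field $F_{k,d}:=\Q(\cos(2\pi/k),\cos(2\pi/d))$, whose degree over $\Q$ is at most $\phi(k)\phi(d)/4\leq N^2/4$. Setting $m:=\mathrm{lcm}(k,d,k',d')\leq N^4$, both theta values lie in $L:=\Q(\cos(2\pi/m))$, of degree $D:=[L:\Q]\leq \phi(m)/2\leq N^4/2$ over $\Q$.

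I would first verify $\alpha\neq 0$. The homomorphism monotonicity of $\vartheta(\overline{\cdot})$ (observed after \eqref{theta dual}) combined with $K_{k/d}\to K_{k'/d'}\iff k/d\leq k'/d'$ from \cite{Bondy-Hell} makes $\alpha$ of constant sign as $k/d$ varies. Strictness follows from Theorem~\ref{Theorem 4}, $\vartheta(\overline{K_{k/d}})=k/d+\OO{1/k}$ with effective constants, combined with $|k'/d'-k/d|\geq 1/(dd')\geq 1/N^2$: for $\min(k,k')$ larger than an absolute threshold, this gap dominates the error terms, and the finitely many remaining pairs are absorbed into the constant $c$.

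The technical core is to exhibit $M\alpha$ as a nonzero algebraic integer with controlled conjugates. Since $2a_s=\zeta+\zeta^{-1}$ for a root of unity $\zeta$, each $2(1-a_s)$ is an algebraic integer of modulus at most $4$, so $D_{k,d}:=d\cdot 2^{d-1}\prod_{s=1}^{d-1}(1-a_s)$ is an algebraic integer with $|D_{k,d}|\leq N\cdot 4^N$. Substituting into \eqref{formula for theta} yields
$$D_{k,d}\cdot \vartheta(\overline{K_{k/d}})\;=\;k\sum_{n=0}^{d-1}\prod_{s=1}^{d-1} 2(c_n-a_s),$$
manifestly an algebraic integer of modulus at most $N^2\cdot 4^N$. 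Setting $M:=D_{k,d}D_{k',d'}$, the element $M\alpha$ is a nonzero algebraic integer in $\mathcal{O}_L$ with $|M|\leq N^2\cdot 16^N$. Every embedding $\sigma:L\hookrightarrow\C$ sends each cosine appearing in the polynomial expression for $M\alpha$ to another cosine of a rational multiple of $\pi$ in $[-1,1]$, so $|\sigma(M\alpha)|\leq C^N$ for an effective absolute~$C$.

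Liouville then concludes. Since $M\alpha\in\mathcal{O}_L\setminus\{0\}$,
$$1\;\leq\;|\Norm_{L/\Q}(M\alpha)|\;=\;\prod_{\sigma}|\sigma(M\alpha)|,\quad\text{hence}\quad |\alpha|\;\geq\;\frac{1}{|M|\prod_{\sigma\neq\mathrm{id}}|\sigma(M\alpha)|}\;\geq\;\frac{1}{16^N N^2\cdot C^{N(D-1)}}\;\geq\;\frac{1}{c^{N^5}}$$
for an absolute effective $c$, using $N\cdot D\leq N^5/2$. The main obstacle is the bookkeeping in Step~3: one must carefully track factors to confirm that $D_{k,d}\cdot\vartheta(\overline{K_{k/d}})$ is genuinely an algebraic integer (using the cleared form displayed above), and verify that the bound $|\sigma(M\alpha)|\leq C^N$ is uniform across all $D$ embeddings, which reduces to the fact that the polynomial expression for $M\alpha$ has total degree $O(N)$ in variables bounded by $1$ in modulus.
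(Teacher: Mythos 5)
Your overall strategy---clear denominators to produce a nonzero algebraic integer of degree $\le N^4$ in a cyclotomic field, bound all conjugates uniformly, and invoke a Liouville-type norm inequality---is exactly the paper's route (its Lemma~\ref{minoration-entier} and the explicit $\alpha$ built from $dd'\prod(2-2a_n)\prod(2-2a'_n)$). Your construction $D_{k,d}\vartheta(\overline{K_{k/d}})=k\sum_n\prod_s 2(c_n-a_s)$ and the resulting estimates are correct and essentially identical to the paper's.

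The genuine gap is your argument that $\alpha\neq 0$, and it is not a small one: this is in fact the hardest step, to which the paper devotes an entire separate result (Theorem~\ref{nonzero}). You invoke Theorem~\ref{Theorem 4} as if the error term were $O(1/k)$, but its actual effective form is $\vartheta(\overline{K_{k/d}})\ge k/d-\frac{4e\pi^2}{3}\frac{d}{k}$, valid only when $k\ge 4d^3/\pi$. With $k\le N$ and $d$ as large as $\sim k/2$, that hypothesis often fails, and even when it holds the error $\sim d/k$ can be of order a constant while the rational gap $|k'/d'-k/d|\ge 1/(dd')$ is only $\sim 1/N^2$; concretely, to beat the error you would need $k'>\mathrm{const}\cdot dd'^2$, which can be $\sim N^3$, far beyond the allowed range $k'\le N$. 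Moreover the leftover cases are not ``finitely many'': fixing an absolute threshold on $\min(k,k')$ still leaves infinitely many pairs as $\max(k,k')$ and $N$ grow, and they cannot be ``absorbed into $c$'' without already knowing a uniform lower bound, which is the statement being proved. The paper's Theorem~\ref{nonzero} instead proceeds via algebraic number theory: assuming equality on an interval of $p/q$'s, it shows the common value lies in $\Q(\zeta_p)\cap\Q(\zeta_{p+1})=\Q$ by linear disjointness of cyclotomic fields, then computes norms $\Norm^{\Q(\zeta_{pq})}_{\Q}(1-\zeta_p)=p^{\phi(q)}$ to force the value to be an integer, and finally appeals to the fact that $\vartheta(\overline{K_{k/d}})\in\N$ forces $k/d\in\N$, a contradiction on the interval. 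Without some argument of this kind, your proof does not establish that $\alpha\neq 0$, so the Liouville step has nothing to apply to.
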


We start with a proof of the weaker property that $\vartheta(\overline{K_{k/d}})\neq
\vartheta(\overline{K_{k'/d'}})$ if $k/d\not = k'/d'$.
\begin{theorem}\label{nonzero}
If $\vartheta(\overline{K_{k/d}}) = \vartheta(\overline{K_{k'/d'}})$
then $k/d = k'/d'$.
\end{theorem}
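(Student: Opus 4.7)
I would argue by contradiction. Assume that $(k,d)$ and $(k',d')$ are coprime pairs with $k\geq 2d$, $k'\geq 2d'$, satisfying $k/d<k'/d'$ but $V:=\vartheta(\overline{K_{k/d}})=\vartheta(\overline{K_{k'/d'}})$. The plan is to combine weak monotonicity of $\vartheta$ along graph homomorphisms, the basic sandwich bounds, and (in the hardest subcase) the asymptotic estimate of Theorem~\ref{Theorem 4} to force a contradiction.

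The first step is monotonicity. By \cite{Bondy-Hell}, $K_{p/q}\to K_{p'/q'}$ whenever $p/q\leq p'/q'$, and the observation following \eqref{theta dual} gives $\vartheta(\overline{K_{p/q}})\leq\vartheta(\overline{K_{p'/q'}})$. Hence $p/q\mapsto\vartheta(\overline{K_{p/q}})$ is non-decreasing on coprime fractions with $p/q\geq 2$, and the hypothesis forces $\vartheta(\overline{K_{p/q}})=V$ for \emph{every} coprime $p/q\in[k/d,k'/d']$ in the allowed range.

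Next, I would record two sandwich bounds. On one side, $\vartheta(\overline{K_{p/q}})\geq\omega(K_{p/q})=\lfloor p/q\rfloor$ by the Lov\'asz sandwich. On the other, vertex-transitivity and \eqref{theta prod}, together with $\vartheta(K_{p/q})\geq\alpha(K_{p/q})=q$, give $\vartheta(\overline{K_{p/q}})\leq p/q$. For an integer $m\geq 2$, $K_{m/1}=K_m$ is a complete graph, so $\vartheta(\overline{K_m})=m$. Consequently, if the interval $(k/d,k'/d']$ contains an integer $m$, then $V=m$ while $V\leq k/d<m$, a contradiction. This leaves the case where $k/d$ and $k'/d'$ both lie in a common open unit interval $(n,n+1)$.

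The final step handles this case using the asymptotic estimate $\vartheta(\overline{K_{p/q}})=p/q+\operatorname{O}(1/p)$ of Theorem~\ref{Theorem 4}, whose proof is independent of the present statement. Fix any irrational $c\in(k/d,k'/d')$ and choose a sequence of coprime $(p_n,q_n)$ with $p_n/q_n\in(k/d,k'/d')$, $p_n/q_n\to c$ and $p_n\to\infty$ (for instance from the continued-fraction expansion of $c$). Constancy of $\vartheta(\overline{K_{\cdot}})$ on the interval gives $V=\vartheta(\overline{K_{p_n/q_n}})$, while the asymptotic gives $\vartheta(\overline{K_{p_n/q_n}})\to c$. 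Hence $V=c$; but $c$ ranges over a continuum of irrationals, which is impossible. The main obstacle is precisely this last step: it relies on the asymptotic of Section~\ref{asymptotic}, and a self-contained substitute would need strict monotonicity extracted directly from the closed-form \eqref{formula for theta}, a more delicate manipulation of Chebyshev-basis coefficients.
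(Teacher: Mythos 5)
Your monotonicity step and the elimination of the case where $(k/d,k'/d']$ contains an integer are fine, and they parallel the paper's setup of a constant value $\vartheta$ on the interval $[k/d,k'/d']$. The fatal problem is the final step, where you invoke Theorem~\ref{Theorem 4} to get $\vartheta(\overline{K_{p_n/q_n}})\to c$ along a sequence $p_n/q_n\to c$ with $c$ irrational. That theorem says $\vartheta(\overline{K_{k/d}})\geq \frac{k}{d}-\frac{4e\pi^2}{3}\frac{d}{k}$ under the hypothesis $k\geq 4d^3/\pi$, and the ensuing $\OO{1/k}$ asymptotic is for $d$ \emph{fixed} and $k\to\infty$, a regime in which $k/d\to\infty$. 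If instead $p_n/q_n$ is to stay in a bounded interval $(n,n+1)$ and converge to an irrational $c$, then necessarily $q_n\to\infty$ along with $p_n$. In that regime (i) the hypothesis $p_n\geq 4q_n^3/\pi$ fails for all large $n$, so Theorem~\ref{Theorem 4} is simply inapplicable, and (ii) even ignoring the hypothesis, the error term $\frac{4e\pi^2}{3}\frac{q_n}{p_n}$ tends to $\frac{4e\pi^2}{3c}$, a positive constant, not to zero. So you have no control asserting $\vartheta(\overline{K_{p_n/q_n}})\to c$, and the contradiction ``$V=c$ for a continuum of $c$'' does not follow. Nothing in the paper gives continuity (let alone the right-limit behavior) of $p/q\mapsto\vartheta(\overline{K_{p/q}})$ on a bounded interval, and proving such a thing would essentially require the injectivity you are trying to establish.

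For the record, the paper takes a completely different, arithmetic route after the same monotonicity reduction. Using the closed formula \eqref{formula for theta}, it expresses the putatively constant value $\vartheta$ as elements of two cyclotomic fields $\Q(\zeta_p)$ and $\Q(\zeta_{p+1})$ for suitably chosen $p$ (exploiting the trace $\Trace_{\Q(\zeta_p)}^{\Q(\zeta_{pq})}$), and deduces from linear disjointness that $\vartheta\in\Q$. A norm computation in $\Q(\zeta_{pq})$, valid for infinitely many primes $p$, then forces the denominator of $\vartheta$ to be $1$, so $\vartheta\in\Z$. Finally it appeals to the fact (from \cite{Pecher}) that $\vartheta(\overline{K_{k/d}})\in\N$ implies $k/d\in\N$, and observes that not every rational in $[k/d,k'/d']$ can be an integer. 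This is precisely the ``more delicate manipulation of the closed-form'' that you flagged as the missing ingredient; the asymptotic estimate is not a substitute for it.
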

\begin{proof} Assume that 
$\vartheta(\overline{K_{k/d}}) = \vartheta(\overline{K_{k'/d'}})$ for
$k/d < k'/d'$. Since
 $\vartheta(\overline{K_{p/q}})$ is an increasing
function of $p/q$ (see Section \ref{preliminaries}), it implies that $\vartheta(\overline{K_{p/q}})$ is
constant for all $p/q \in [k/d,k'/d']$. This constant will be denoted
$\vartheta$ for simplicity.

\begin{claim}
The number $\vartheta$ is rational.
\end{claim}
\begin{proof}
Let $q \geq 5$ be a prime such that $1/q <
\frac{1}{4}(k'/d'-k/d)$. Then there exists $r$ such that
$r/q,(r+1)/q,(r+2)/q, (r+3)/q \in  [k/d,k'/d']$. Since $q \geq 5$, it
divides at most one of the four numbers $r,r+1,r+2,r+3$. Hence one can
find $p$ such that $p/q, (p+1)/q \in  [k/d,k'/d']$ and $q$ is prime to 
$p$ and $p+1$. 

For any positive integer $a$, denote $\zeta_a = \exp(2i\pi/a)$. 
We refer to \cite{Lang} for the basic notions of algebraic number
theory that will be involved next. For a number field $K$, we let
$\Gal(K)$ denote its Galois group over $\Q$. For number fields
$K\subset L$, and $x\in L$, $\Trace^L_K(x)$ and $\Norm^L_K(x)$ denote
respectively the trace and norm of $x$ in the extension $L/K$. 

It is well-known (see \cite{Lang}) that 
$$
\begin{array}{rcl}
\Psi_a\  :\ (\Z/a\Z)^\times & \longrightarrow &  \Gal(\Q(\zeta_{a}))\\
n & \mapsto & \sigma_n \text{ such that } \sigma_n(\zeta_{a}) =
\zeta_{a}^n
\end{array}
$$
is an isomorphism. Furthermore,
if $a$ and $b$ are coprime,
$$(\Z/ab\Z)^\times = (\Z/a\Z)^\times\times (\Z/b\Z)^\times$$
by Chinese Remainder Theorem.
It implies immediately that
$$\Gal(\Q(\zeta_{ab})) = \Gal(\Q(\zeta_{a}))\times \Gal(\Q(\zeta_{b})),$$
hence the fields $\Q(\zeta_{a})$ and $\Q(\zeta_{b})$ are linearly
disjoint over $\Q$.

We now compute $\vartheta = \vartheta(\overline{K_{p/q}})$ using
formula \eqref{formula for theta}. By definition, we have $c_n =\linebreak \cos(2n\pi/q)
= \frac12(\zeta_{q}^n+\zeta_{q}^{-n})= \sigma_n(c_1)$ for $1\leq n\leq
q-1$. It follows that
$$\vartheta =
\frac{p}{q}(1+\Trace_{\Q(\zeta_p)}^{\Q(\zeta_{pq})}(L_0(c_1))).$$
It gives immediately that $\vartheta \in \Q(\zeta_p)$. The same result
using $(p+1)/q$ leads to $\vartheta \in \Q(\zeta_{p+1})$. Since the
fields $\Q(\zeta_p)$ and $\Q(\zeta_{p+1})$ are linearly disjoint, this
proves the result.
\end{proof} 

\begin{claim}
The number $\vartheta$ is an integer.
\end{claim}

\begin{proof}
Let $\vartheta = \frac{a}{b}$ with $a,b \in \N$ coprime. Using the
same arguments as in the previous lemma, for any prime $p$
such that $1/p < \frac{1}{4}(d/k-d'/k')$, one can find $q$, with $p$
coprime to $q$ and $q+1$, such that
$q/p,(q+1)/p \in [d/k, d'/k']$ . It means that $p/q, p/(q+1) \in
[k/d,k'/d']$.

Using formula \eqref{formula for theta} for $p/q$, one sees that
$x = q\prod_{n=1}^{q-1}(2-2a_n)\vartheta$ is an algebraic integer,
hence $\Norm_{\Q}^{\Q(\zeta{pq})}(x) \in \Z$.
We now compute this norm. 

Since $q\vartheta$ is rational,
$\Norm_{\Q}^{\Q(\zeta{pq})}(q\vartheta) = (q\vartheta)^{\phi(pq)}$
where $\phi$ is the Euler function.
Since $p$ is a prime, $a_n$ is a conjugate of $a_1$ for all $1 \leq n\leq q-1$, hence 
$\Norm_{\Q}^{\Q(\zeta{pq})}(\prod_{n=1}^{q-1}(2-2a_n)) = (\Norm_{\Q}^{\Q(\zeta{pq})}(2-2a_1))^{q-1}$.
We also have
$$2-2a_1= 2 - 2\cos(\lfloor \frac{p}{q}\rfloor\frac{2\pi}{p}) =
(1-\zeta_p^{\lfloor \frac{p}{q}\rfloor})(1-\zeta_p^{-\lfloor
  \frac{p}{q}\rfloor}).$$
 Hence 
$\Norm_{\Q}^{\Q(\zeta{pq})}(2-2a_1) = (\Norm_{\Q}^{\Q(\zeta{pq})}(1-\zeta_p))^2$. Finally,
\begin{align*}
\Norm_{\Q}^{\Q(\zeta{pq})}(1-\zeta_p) &= \Norm_{\Q}^{\Q(\zeta{p})}(\Norm_{\Q(\zeta_p)}^{\Q(\zeta{pq})}(1-\zeta_p)) \\
&=  (\Norm_{\Q}^{\Q(\zeta{p})}(1-\zeta_p))^{\phi(q)} \\
& = p^{\phi(q)}
\end{align*}
(see \cite{Lang}). Summing up all partial results, one gets
$$(q\frac{a}{b})^{\phi(pq)}p^{2(q-1)\phi(q)} \in \Z.$$
If $l\not = p$ is a prime factor of $b$, then $l$ divides $q$ by the previous formula. But the same formula holds with $q+1$, hence $l$ divides also $q+1$. It follows that $b$ is a power of $p$. But this is true for any $p$ large enough. Hence $b = 1$. This proves the result.
\end{proof}

To finish the proof of Theorem \ref{nonzero},  we use the following result from
\cite{Pecher} (see also \cite{GrotschelAl}): 
if $\vartheta(\overline{K_{k/d}}) \in \N$ then $k/d
\in \N$. But every rational number in the interval
$[k/d,k'/d']$ cannot be an integer.
\end{proof} 

We can now start the proof of Theorem \ref{Theorem 3}. It is based on the following obvious lemma.

\begin{lemma}\label{minoration-entier}
Let $\alpha$ be a non zero algebraic integer of degree less than $\delta$ and $c\geq 1$ such that the absolute values of the conjugates of $\alpha$ are less than $c$ then
$$\vert\alpha\vert \geq \frac{1}{c^\delta}$$
\end{lemma}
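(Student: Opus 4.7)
The plan is to exploit the classical fact that the norm of a nonzero algebraic integer is a nonzero rational integer, and hence has absolute value at least $1$.

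First I would consider the minimal polynomial $P(X) \in \Z[X]$ of $\alpha$ over $\Q$; since $\alpha$ is an algebraic integer, $P$ is monic with integer coefficients, and by hypothesis its degree $d'$ satisfies $d' < \delta$. Write the conjugates of $\alpha$ (i.e., the roots of $P$) as $\alpha_1 = \alpha, \alpha_2, \dots, \alpha_{d'}$. Then the constant term of $P$ is, up to sign, the product $\prod_{i=1}^{d'} \alpha_i$, which is a rational integer. Since $\alpha \neq 0$, none of its conjugates is zero, so this product is a \emph{nonzero} rational integer, and therefore
\begin{equation*}
\Big| \prod_{i=1}^{d'} \alpha_i \Big| \geq 1.
\end{equation*}

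Next I would isolate $|\alpha|$ in this inequality. Using the hypothesis $|\alpha_i| < c$ for each conjugate (in particular for $i = 2, \dots, d'$), one obtains
\begin{equation*}
|\alpha| \geq \frac{1}{\prod_{i=2}^{d'} |\alpha_i|} \geq \frac{1}{c^{d'-1}}.
\end{equation*}
Finally, since $c \geq 1$ and $d' - 1 < \delta$, one has $c^{d'-1} \leq c^{\delta}$, which yields $|\alpha| \geq 1/c^{\delta}$, as required.

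There is no real obstacle here: the entire argument is a one-line consequence of the integrality of the norm. The only point requiring a word of care is the use of a strict versus non-strict inequality on $|\alpha_i|$ (the statement says the conjugates have absolute value less than $c$, but the bound produced is $1/c^\delta$, which tolerates even $\leq$); this is harmless and I would simply note it in passing.
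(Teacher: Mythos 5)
Your proof is correct and follows essentially the same route as the paper: the paper simply observes that $\vert \Norm_{\Q}^{\Q(\alpha)}(\alpha)\vert \geq 1$ for a nonzero algebraic integer and immediately deduces $\vert\alpha\vert c^{\delta-1}\geq 1$, which is exactly your product-of-conjugates argument spelled out in slightly more detail.
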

\begin{proof}
Since $\alpha$ is a non zero algebraic integer , $ \vert \Norm_{\Q}^{\Q(\alpha)}(\alpha)\vert\geq 1$. It follows immediately that
$$\vert\alpha\vert c^{\delta-1} \geq 1.$$
\end{proof}

Let 
\begin{align*}
\alpha &= dd'\prod_{n=1}^{d-1}(2-2a_n)\prod_{n=1}^{d'-1}(2-2a'_n)(\vartheta(\overline{K_{k/d}})-\vartheta(\overline{K_{k'/d'}}))\\
&= kd'\prod_{n=1}^{d'-1}(2-2a'_n)\sum_{n=0}^{d-1}\prod_{m=1}^{d-1}(2c_n-2a_m)
-k'd\prod_{n=1}^{d-1}(2-2a_n)\sum_{n=0}^{d'-1}\prod_{m=1}^{d'-1}(2c'_n -2a'_m)
\end{align*}
with the obvious notations $c'_n:=\cos\Big(\frac{2n\pi}{d'}\Big)$ and
$a'_n:=\cos\Big(\Big\lfloor \frac{n k'}{d'} \Big\rfloor \frac{2\pi}{k'}\Big)$.
The number $\alpha$ is thus an algebraic integer, and 
it is non zero by Theorem \ref{nonzero}. Moreover it belongs to $\Q(\zeta_{kdk'd'})$, hence its degree is less than $N^4$.

Let $\beta$ be a conjugate of $\alpha$. Since the absolute values of the conjugates of $a_n, a'_n, c_n$ and $c'_n$ are all less than 1, one gets
$$\vert\beta\vert \leq kd'4^{d'-1}d4^{d-1}+k'd4^{d-1}d'4^{d'-1}\leq 2N\frac{N}{2}4^{\frac{N}{2}}\frac{N}{2}4^{\frac{N}{2}} \leq N^34^N.$$
It follows from Lemma \ref{minoration-entier} that
$$\vert\alpha\vert \geq \frac{1}{(N^34^N)^{N^4}}.$$
Furthermore, $\vert
dd'\prod_{n=1}^{d-1}(2-2a_n)\prod_{n=1}^{d'-1}(2-2a'_n)\vert 
\leq N^24^N.$ This implies immediately that
$$\vert\vartheta(\overline{K_{k/d}})-\vartheta(\overline{K_{k'/d'}})\vert 
\geq  \frac{1}{N^24^N(N^34^{N})^{N^4}}.$$
This finishes the proof of Theorem \ref{Theorem 3}.

\section{The asymptotic behaviour of $\vartheta(\overline{K_{k/d}})$}\label{asymptotic}

From Lov\'asz's formula \eqref{theta cyclic},  the asymptotic behaviour of  the theta number of odd holes $C_{2k+1}$ is (see \cite{Bohman} for instance):
\begin{equation} \label{eq_oddholes}
 \vartheta\left( C_{2k+1} \right)  =  \frac{2k+1}{2} + O\left( \frac{1}{k} \right).
\end{equation}
In general, we have $\vartheta(\overline{K_{k/d}})\leq k/d$. Indeed, 
\begin{equation*}
\vartheta(\overline{K_{k/d}})=k/\vartheta(K_{k/d})\leq
k/\omega(\overline{K_{k/d}})=k/d.
\end{equation*}
In this section, we prove:

\begin{theorem}\label{Theorem 4}
 If $d \geq 3$ and $k \geq 4d^3/\pi$ then $\vartheta(\overline{K_{k/d}}) \geq \frac{k}{d} - \frac{4e\pi^2}{3}\frac{d}{k}$. 
 Hence, for $d$ fixed, 
\begin{equation*}
\vartheta(\overline{K_{k/d}}) = \frac{k}{d}+\OO{\frac{1}{k}}.
\end{equation*}
\end{theorem}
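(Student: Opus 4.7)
The plan is to start from the closed formula of Theorem \ref{Theorem 1}, namely
$$\vartheta(\overline{K_{k/d}}) = \frac{k}{d}\sum_{n=0}^{d-1}L_0(c_n),\qquad L_0(y) := \prod_{s=1}^{d-1}\frac{y-a_s}{1-a_s},$$
and to observe that $L_0(c_0)=L_0(1)=1$, so the $n=0$ term contributes exactly $k/d$. It is then enough to prove that $\sum_{n=1}^{d-1}|L_0(c_n)| \leq (4e\pi^2/3)\,d^2/k^2$, which yields the claimed lower bound after multiplying by $k/d$.

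The key observation for $n\geq 1$ is that $c_{d-n}=c_n$, so among the $d-1$ factors of $L_0(c_n)$, two of them---those at $s=n$ and $s=d-n$---have small numerators. Writing $\alpha_n := 2\pi n/d$, $\beta_n := \lfloor nk/d\rfloor\cdot 2\pi/k$, $\eta_n := \alpha_n-\beta_n \in (0,2\pi/k)$ and $\delta_n := 2\pi/k-\eta_n$, the identity $\cos A-\cos B = -2\sin\frac{A+B}{2}\sin\frac{A-B}{2}$ gives
$$c_n-a_n = -2\sin(\alpha_n-\eta_n/2)\sin(\eta_n/2),\qquad c_n-a_{d-n} = 2\sin(\alpha_n+\delta_n/2)\sin(\delta_n/2).$$
The essential quantitative input is that $\eta_n+\delta_n=2\pi/k$, from which the product-to-sum formula yields $\sin(\eta_n/2)\sin(\delta_n/2)\leq \sin^2(\pi/(2k))\leq \pi^2/(4k^2)$; combined with the leading-order approximation $\sin(\alpha_n-\eta_n/2)\sin(\alpha_n+\delta_n/2)\approx \sin^2\alpha_n = 4\sin^2(\pi n/d)\cos^2(\pi n/d)$, this controls the two small numerators.

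For the remaining factors and the two small denominators $(1-a_n)(1-a_{d-n})\approx 4\sin^4(\pi n/d)$, I would compare the product to the ``ideal'' Lagrange polynomial $\tilde L_0(y) := \prod_{s=1}^{d-1}(y-c_s)/(1-c_s)$, which admits the closed form $\tilde L_0(\cos\theta) = \sin^2(d\theta/2)/(d^2\sin^2(\theta/2))$. A second-order expansion of $\tilde L_0$ at its double zero $y=c_n$ then produces the useful identity $\prod_{s\neq n,\,d-n}(c_n-c_s)/(1-c_s) = 1/(4\cos^2(\pi n/d))$. Assembling everything yields a bound of the form $|L_0(c_n)| \leq e\pi^2/(4k^2\sin^2(\pi n/d))$, and summing with the classical identity $\sum_{n=1}^{d-1}\sin^{-2}(\pi n/d) = (d^2-1)/3$ produces the desired estimate on the tail sum.

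The main obstacle I anticipate is controlling the multiplicative perturbation coming from replacing each $a_s$ by $c_s$ in the $d-1$ factors. Each such relative error is of size $O(d^2/k)$ in the worst case---occurring when $|c_n-c_s|$ is smallest, of order $1/d^2$---and the hypothesis $k\geq 4d^3/\pi$ is precisely what guarantees that the full product $(1+O(d^2/k))^{d-1}$ of these relative errors is bounded by a fixed constant, which one wants to bring down to exactly $e$. Tracking this uniformly in $n$, especially for $n$ near $1$ or $d-1$ where several $1-a_s$ are themselves small and demand sharper individual estimates, is the technical heart of the argument.
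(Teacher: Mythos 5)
Your proposal follows essentially the same route as the paper's proof: isolate the $n=0$ term, exploit $c_n=c_{d-n}$ to identify the two factors with small numerators, compare the remaining product to the ``ideal'' Lagrange factors at the exact $c_s$ (the paper writes this as $\prod_{i\neq n,d-n}(1+\delta_i/(c_n-c_i))$ and bounds it by $\exp(\tfrac{4}{\pi}\tfrac{d^3}{k})\leq e$ using $|c_n-c_i|\geq\pi^2/(2d^2)$ and $k\geq 4d^3/\pi$), and finish with $\sum_{n=1}^{d-1}(1-c_n)^{-1}=(d^2-1)/6$, i.e.\ your $\sum\sin^{-2}(\pi n/d)=(d^2-1)/3$. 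Your stated pointwise bound $|L_0(c_n)|\leq e\pi^2/(4k^2\sin^2(\pi n/d))$ is a constant factor more optimistic than the paper's $4e\pi^2/(k^2\sin^2(\pi n/d))$, and your worry about $n$ near $1$ or $d-1$ is misplaced---the denominator $\prod(1-a_s)$ is $n$-independent and is treated once and for all, while the numerator perturbation is controlled uniformly in $n$ once the two nearest indices $s=n,d-n$ are split off---but neither changes the conclusion.
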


Notice that for $d=2$, Theorem \ref{Theorem 4} agrees with Equation \eqref{eq_oddholes}.

\begin{proof}
Let $d \geq 3$ and $k \geq 4d^3/\pi$.
For every $0 \leq i \leq d-1$, let $c_i = \cos(2i\pi/d)$, $\sigma_i =
\sin(2i\pi/d)$, 
$a_i=\cos\Big(\Big\lfloor \frac{i k}{d} \Big\rfloor
\frac{2\pi}{k}\Big)$ and $\delta_i = c_i - a_i$. 
We have  $a_i = \cos(2i\pi/d - 2\pi \epsilon_i)$, with  $\epsilon_i = s_i/kd$ and $s_i = ik \mod d$.

\begin{claim}
For every  $1\leq j \leq d-1$, we have
\begin{align}
\label{eq_productci-cj}
\prod_{\substack{ i=0 \\ i \neq j, d-j}}^{d-1} \left( c_j - c_i \right) &=  \frac{-d^2}{2^d \sigma_j^2} \quad\text{ if } j \neq 0, d/2 \\
\label{eq_productci-cjb}
\prod_{\substack{ i=0 \\ i \neq j, d-j}}^{d-1} \left( c_j - c_i \right) &=  \frac{-d^2}{2^{d-1}} \quad\text{ if } j=d/2 \\
\label{eq_sum1/1-ci}
\sum_{i=1}^{d-1} \frac{1}{1-c_i}  &=  \frac{d^2-1}{6}
\end{align}
\end{claim}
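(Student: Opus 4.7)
The plan is to handle the two product identities \eqref{eq_productci-cj} and \eqref{eq_productci-cjb} by factoring the differences $c_j - c_i$ through $d$-th roots of unity, and to obtain \eqref{eq_sum1/1-ci} from a classical cosecant sum identity.

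Let $\zeta$ denote a primitive $d$-th root of unity, so that $c_i = (\zeta^i + \zeta^{-i})/2$. The starting point is the factorization
\begin{equation*}
2(c_j - c_i) = \zeta^{-j}(\zeta^{j-i} - 1)(\zeta^{j+i} - 1),
\end{equation*}
which one checks by direct expansion. The remaining computation will then reduce to the evaluation $\prod_{k=1}^{d-1}(1 - \zeta^k) = d$ (obtained by specializing $\prod_{k=1}^{d-1}(x - \zeta^k) = 1 + x + \cdots + x^{d-1}$ at $x = 1$), combined with the explicit value $(\zeta^{2j} - 1)^2 = -4\sigma_j^2\,\zeta^{2j}$.

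For \eqref{eq_productci-cj}, the key combinatorial observation is that, as $i$ runs over $\{0,\dots,d-1\}\setminus\{j,d-j\}$, each of $(j-i)\bmod d$ and $(j+i)\bmod d$ sweeps out the set $\{1,\dots,d-1\}\setminus\{2j\bmod d\}$ exactly once. The identity then falls out after tracking the prefactor of $\zeta$ (which collapses to $\zeta^{2j}$ since there are $d-2$ indices) and dividing the full square product $\bigl(\prod_{k=1}^{d-1}(\zeta^k-1)\bigr)^2 = d^2$ by the missing factor $(\zeta^{2j}-1)^2$. For \eqref{eq_productci-cjb} the index sets simplify because $j = d-j$: both $(j\pm i)\bmod d$ now sweep out all of $\{1,\dots,d-1\}$, and the prefactor becomes $\prod_i \zeta^{-j} = \zeta^{-j(d-1)} = -1$, yielding the cleaner value $-d^2/2^{d-1}$.

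For \eqref{eq_sum1/1-ci}, the plan is to rewrite $1 - c_i = 2\sin^2(i\pi/d)$ and invoke the classical identity $\sum_{i=1}^{d-1}\csc^2(i\pi/d) = (d^2-1)/3$, which follows from specializing the Mittag-Leffler expansion $\pi^2/\sin^2(\pi z) = \sum_{n\in\Z}1/(z-n)^2$ to $z = i/d$ and summing over $i$. The only genuinely delicate part of the whole calculation is the bookkeeping of the index sets $(j\pm i)\bmod d$ in the first case; once these are correctly identified, everything else is a straightforward mechanical assembly.
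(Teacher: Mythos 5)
Your proof is correct, and it takes a genuinely different route from the paper's. The paper derives all three identities from one source: differentiating $T_d(\cos x)=\cos(dx)$ once, twice, and three times, feeding each derivative through the factorization $T_d(x)-1=2^{d-1}\prod_{i=0}^{d-1}(x-c_i)$, and then evaluating at $x=\pi$, $x=2j\pi/d$, $x=0$ respectively. You instead split the work: for the two product identities you work directly over the $d$-th roots of unity via the factorization $2(c_j-c_i)=\zeta^{-j}(\zeta^{j-i}-1)(\zeta^{j+i}-1)$ together with $\prod_{s=1}^{d-1}(1-\zeta^s)=d$ and the index bookkeeping you describe, and for the sum you reduce to the standard cosecant identity $\sum_{i=1}^{d-1}\csc^2(i\pi/d)=(d^2-1)/3$. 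The trade-off is roughly this: the paper's method is uniform and mechanical once one accepts a slightly messy triple differentiation, whereas your method for the products is conceptually cleaner (it really exposes why the constant $d^2$ and the $\sigma_j^2$ appear), but it imports the cosecant sum as a separate classical fact rather than obtaining it from the same engine. Both are sound; your index-set accounting for $(j\pm i)\bmod d$ checks out (the removed residues $\{0,2j\bmod d\}$ match on both sides precisely because $j\neq 0,d/2$ ensures $2j\not\equiv 0\pmod d$), and the collapse of the prefactor $\zeta^{-j(d-2)}=\zeta^{2j}$ and the evaluation $(\zeta^{2j}-1)^2=-4\sigma_j^2\zeta^{2j}$ are correct.
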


\begin{proof}
The proof of these equalities is a short computation and the details
are omitted. Equation \eqref{eq_productci-cjb} (respectively
\eqref{eq_productci-cj}, \eqref{eq_sum1/1-ci})
is obtained by taking the first derivative (respectively the second derivative, the
third derivative) with respect to $x$ of the equality $T_k(\cos x)= \cos(kx)$,
taking into account the identity  $T_k(x)-1 = 2^{k-1}
\prod_{i=0}^{k-1}(x-c_i)$, then by evaluating the resulting identity
at  $\pi$ (respectively $2j\pi/q$, respectively $0$).
\end{proof}

\begin{claim}
For every  $1\leq j \leq d-1$, we have
\begin{align}
\label{eq_deltajdeltaq-j}
 \left| \frac{\delta_j \delta_{d-j}}{\sigma_j^2} \right| &\leq \frac{4\pi^2}{k^2} \left(1+\frac{d\pi}{k}\right) \quad\text{ if } j \neq {d/2}\\
\label{eq_deltaq/2}
\left| \delta_j \right| &\leq \frac{4\pi^2}{k^2} \quad\text{ if } j = {d/2}
\end{align}
\end{claim}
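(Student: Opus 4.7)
The plan is to derive both inequalities from the sum-to-product identity
$$\delta_j = \cos\Big(\frac{2j\pi}{d}\Big) - \cos\Big(\frac{2j\pi}{d} - 2\pi\epsilon_j\Big) = -2\sin\Big(\frac{2j\pi}{d} - \pi\epsilon_j\Big)\sin(\pi\epsilon_j),$$
which cleanly factors $\delta_j$ into a bounded outer sine and a small inner sine of size $O(1/k)$. The two inequalities \eqref{eq_deltajdeltaq-j} and \eqref{eq_deltaq/2} then split according to whether the outer sine is bounded away from $0$ or vanishes to first order in $\epsilon_j$.

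The case $j = d/2$ is immediate. It forces $d$ even, and since $\gcd(k,d) = 1$ also $k$ odd, so $s_{d/2} = (d/2)k \bmod d = d/2$ and $\epsilon_{d/2} = 1/(2k)$. The identity above then collapses to $\delta_{d/2} = -2\sin^2(\pi/(2k))$, and the elementary bound $|\sin x| \leq |x|$ yields $|\delta_{d/2}| \leq \pi^2/(2k^2) \leq 4\pi^2/k^2$.

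For the main case $j \neq d/2$, I would set $\mu = \pi\epsilon_j$ and $\nu = \pi\epsilon_{d-j}$ and rely on three ingredients. First, $\sigma_{d-j}^2 = \sigma_j^2$, so the claimed normaliser is symmetric in $j \leftrightarrow d-j$. Second, since $\gcd(k,d) = 1$ and $1 \leq j \leq d-1$, both $s_j, s_{d-j}$ lie in $\{1,\dots,d-1\}$ and satisfy $s_j + s_{d-j} = d$, whence $\mu + \nu = \pi/k$ and AM--GM gives $\mu\nu \leq \pi^2/(4k^2)$. Third, for $j \in \{1,\dots,d-1\}\setminus\{d/2\}$ the angle $2j\pi/d$ is at distance at least $\pi/d$ from $\{0,\pi,2\pi\}$, hence $|\sigma_j| \geq \sin(\pi/d) \geq 2/d$ by Jordan's inequality. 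Expanding $\sin(2j\pi/d - \pi\epsilon_j) = \sigma_j\cos\mu - c_j\sin\mu$ and applying the triangle inequality yields $|\sin(2j\pi/d - \pi\epsilon_j)| \leq |\sigma_j| + \mu$, and similarly for the $(d-j)$-factor; combining,
$$|\delta_j\delta_{d-j}| \;\leq\; 4\mu\nu\,(|\sigma_j|+\mu)(|\sigma_j|+\nu) \;\leq\; \frac{\pi^2}{k^2}\Big(|\sigma_j| + \frac{\pi}{k}\Big)^2.$$

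Dividing by $\sigma_j^2$ and using $\pi/(k|\sigma_j|) \leq d\pi/(2k)$ produces $(\pi^2/k^2)\bigl(1 + d\pi/(2k)\bigr)^2$, which under the hypothesis $k \geq 4d^3/\pi$ is routinely majorised by $(4\pi^2/k^2)(1 + d\pi/k)$ after an elementary expansion of the square. The only genuine obstacle is the near-vanishing of $\sigma_j$ when $2j\pi/d$ approaches a multiple of $\pi$: the exact vanishing at $j = d/2$ is precisely what forces the separate case \eqref{eq_deltaq/2}, while at the neighbouring indices the Jordan lower bound $|\sigma_j| \geq 2/d$ is just strong enough to absorb the division by $\sigma_j^2$ into a factor linear in $d/k$.
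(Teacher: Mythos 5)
Your proof is correct and follows essentially the same route as the paper: both write $\delta_j$ as a factor of size $O(\pi/k)$ times a sine close to $\sigma_j$, bound the resulting product of two sines by roughly $\sigma_j^2(1+O(1/(k|\sigma_j|)))$, and finish with $|\sigma_j|\geq\sin(\pi/d)\geq 2/d$ and the observation that $j=d/2$ forces $\epsilon_{d/2}=1/(2k)$. The only substantive difference is that the paper uses the mean value theorem ($\delta_j=-2\pi\epsilon_j\sin(z_j)$) together with the asymmetric observation that at least one of $|\sin(z_j)|,|\sin(z_{d-j})|$ is $\leq|\sigma_j|$, whereas your exact sum-to-product factorisation with the symmetric angle-addition bound $|\sin(2j\pi/d-\mu)|\leq|\sigma_j|+\mu$ treats both factors uniformly and avoids that case analysis; this is a minor but genuine simplification, and your use of $\epsilon_j+\epsilon_{d-j}=1/k$ is marginally sharper than the paper's crude $\epsilon_i<1/k$.
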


\begin{proof}
For every $1\leq j \leq d-j$, let $z_j \in [2j\pi/d - 2\pi \epsilon_j,
  2j\pi/d]$ such that $\delta_j = - 2 \pi \epsilon_j \sin(z_j)$. As
$|\sin(z_j)| \leq |\sigma_j|$ or  $|\sin(z_{d-j})| \leq |\sigma_j|$,
we get 
\begin{equation*}
|\delta_j \delta_{d-j}| \leq 4\pi^2 |\sigma_j| (|\sigma_j| +
2\pi/k)/k^2.
\end{equation*}
Taking into account  $|\sigma_j| \geq \sin\frac{\pi}{d} \geq 2/d$, we
obtain \eqref{eq_deltajdeltaq-j}. 

The inequality \eqref{eq_deltaq/2} is straightforward.
\end{proof}

\begin{claim}
\label{cl_num}
 For every $1\leq j \leq d-1$, we have
\begin{equation} \label{eq_cj-ai}
\prod_{i=1}^{d-1} |c_j-a_i|  \leq  \frac{e \pi^2}{2^{d-3} (1-c_j)} \frac{d^2}{k^2}.
\end{equation}
\end{claim}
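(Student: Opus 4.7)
The plan is to split the product $\prod_{i=1}^{d-1}|c_j - a_i|$ into the factors where $c_i = c_j$ (the \emph{degenerate} factors) and those where $c_i \neq c_j$ (the \emph{generic} ones). Set $D := \{i \in \{1,\dots,d-1\} : c_i = c_j\}$, so $D = \{j, d-j\}$ when $j \neq d/2$ and $D = \{d/2\}$ otherwise, and $G := \{1,\dots,d-1\} \setminus D$. For $i \in D$ one has $c_j - a_i = (c_j - c_i) + \delta_i = \delta_i$, so the degenerate subproduct equals either $|\delta_j \delta_{d-j}|$ or $|\delta_{d/2}|$, and I would bound each directly using \eqref{eq_deltajdeltaq-j} or \eqref{eq_deltaq/2}.

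For the generic subproduct I will write $c_j - a_i = (c_j - c_i)\bigl(1 + \delta_i/(c_j - c_i)\bigr)$ and apply $|1+x| \leq e^{|x|}$ to obtain
\begin{equation*}
\prod_{i \in G} |c_j - a_i| \;\leq\; \Bigl(\prod_{i \in G}|c_j - c_i|\Bigr) \cdot \exp\Bigl(\sum_{i \in G}\frac{|\delta_i|}{|c_j - c_i|}\Bigr).
\end{equation*}
The first product is available in closed form from \eqref{eq_productci-cj}/\eqref{eq_productci-cjb} after dividing out the missing $i = 0$ factor $1 - c_j$. The quantitative heart of the argument is the bound on the exponent, which I will call $S$: using $|\delta_i| \leq 2\pi/k$ (immediate from writing $a_i = \cos(2i\pi/d - 2\pi\epsilon_i)$ with $\epsilon_i < 1/k$), together with the crude uniform lower bound $|c_j - c_i| = 2|\sin(\pi(i+j)/d)||\sin(\pi(i-j)/d)| \geq 2\sin^2(\pi/d) \geq 8/d^2$ and $|G| \leq d - 2$, I get $S \leq \pi d^3/(4k)$. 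Under the hypothesis $k \geq 4d^3/\pi$ this is at most $\pi^2/16 < 1$, so the exponential contributes at most $e$.

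Multiplying the two subproducts then yields \eqref{eq_cj-ai} directly in the case $j = d/2$. In the case $j \neq d/2$ one extra piece of housekeeping is required: the correction $(1 + d\pi/k)$ from \eqref{eq_deltajdeltaq-j} has to be bounded by $2$ using $k \geq 4d^3/\pi \geq d\pi$ (valid for $d \geq 2$); combined with $e^S \leq e$ this produces exactly the constant $e\pi^2/2^{d-3}$ on the right of \eqref{eq_cj-ai}. The main obstacle I anticipate is precisely the calibration of constants: the hypothesis $k \geq 4d^3/\pi$ is chosen so that both the exponent $S$ and the relative perturbation $d\pi/k$ are simultaneously small enough to fit together inside the single factor $e$ in the target bound, and any noticeably looser assumption on $k$ would force either a sharper lower bound on $|c_j - c_i|$ than the uniform $8/d^2$ one I use or a larger constant in the claim.
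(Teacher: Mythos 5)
Your proposal is correct and follows essentially the same route as the paper's proof: isolate the degenerate factors (indices $j$, $d-j$ or $d/2$), write each generic factor as $(c_j-c_i)\bigl(1+\delta_i/(c_j-c_i)\bigr)$, bound the correction product by $\exp\bigl(\sum|\delta_i|/|c_j-c_i|\bigr)$, and invoke \eqref{eq_productci-cj}/\eqref{eq_productci-cjb} for the closed form. The only differences are cosmetic: you use the slightly sharper lower bound $|c_j-c_i|\ge 8/d^2$ where the paper uses $\pi^2/(2d^2)$, and you make explicit the step $(1+d\pi/k)\le 2$ that the paper absorbs silently into the passage from $2^{d-2}$ to $2^{d-3}$.
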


\begin{proof}
Let $m_j=\delta_j \delta_{d-j}$ if $j \neq d/2$, and $m_{d/2}=\delta_{d/2}$.
We have 
\begin{align*}
\prod_{i=1}^{d-1} |c_j-a_i| &=  \left| m_j \prod_{ \substack{i=1 \\ i  \neq j, d-j} }^{d-1} (c_j-c_i) \right| \left| \prod_{\substack{i=1 \\ i  \neq j, d-j}}^{d-1}  \left( 1 + \frac{\delta_i}{c_j - c_i} \right) \right| \\
 &\leq  \frac{\pi^2}{2^{d-3} (1-c_j)} \frac{d^2}{k^2}   \left| \prod_{\substack{ i=1 \\ i \neq j, d-j}}^{d-1}  \left( 1 + \frac{\delta_i}{c_j - c_i} \right) \right|  \text{ due to \eqref{eq_productci-cj}, \eqref{eq_productci-cjb}, \eqref{eq_deltajdeltaq-j}, \eqref{eq_deltaq/2}} \\
 &\leq \frac{\pi^2}{2^{d-3} (1-c_j)} \frac{d^2}{k^2}  \exp\left( \frac{2\pi}{k}  \sum_{\substack{ i=1 \\ i \neq j, d-j}}^{d-1} \frac{1}{\left| c_j - c_i \right|} \right) \\
&\leq  \frac{\pi^2}{2^{d-3} (1-c_j)} \frac{d^2}{k^2}  \exp\left( \frac{4}{\pi} \frac{d^3}{k}  \right) \text{ since } \left| c_j - c_i \right| \geq \frac{\pi^2}{2d^2} \text{ for every } i \neq j, d-j \\
&\leq  \frac{\pi^2 e}{2^{d-3} (1-c_j)} \frac{d^2}{k^2} \quad\text{ as }k \geq 4d^3/\pi.
\end{align*}
\end{proof}

\begin{claim}
\label{cl_den}
We have
\begin{equation*}
\prod_{i=1}^{d-1} \left( 1-a_i \right)  \geq  \frac{d^2}{2^d}.
\end{equation*}
\end{claim}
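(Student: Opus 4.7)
The plan is to write $\prod_{i=1}^{d-1}(1-a_i)$ as a controlled perturbation of its unperturbed analogue $\prod_{i=1}^{d-1}(1-c_i)$. With $\delta_i=c_i-a_i$ as in the previous claims, the elementary factorisation $1-a_i=(1-c_i)(1+\delta_i/(1-c_i))$ gives
\begin{equation*}
\prod_{i=1}^{d-1}(1-a_i)=\prod_{i=1}^{d-1}(1-c_i)\cdot\prod_{i=1}^{d-1}\Big(1+\frac{\delta_i}{1-c_i}\Big).
\end{equation*}
The first factor equals $d^2/2^{d-1}$: this is the classical identity $\prod_{i=1}^{d-1}(1-\cos(2i\pi/d))=d^2/2^{d-1}$, equivalent to $\prod_{i=1}^{d-1}\sin(i\pi/d)=d/2^{d-1}$, and also obtained by differentiating $T_d(x)-1=2^{d-1}\prod_{i=0}^{d-1}(x-c_i)$ at $x=1$, which gives $T_d'(1)=d^2$. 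It therefore suffices to show that the second factor is at least $1/2$.

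To control the perturbation factor, I would bound each $\delta_i$ uniformly via the Lipschitz estimate
\begin{equation*}
|\delta_i|=\Big|\cos\Big(\tfrac{2i\pi}{d}\Big)-\cos\Big(\tfrac{2\pi\lfloor ik/d\rfloor}{k}\Big)\Big|\leq \frac{2\pi s_i}{kd}\leq \frac{2\pi}{k},
\end{equation*}
and combine it with the identity $\sum_{i=1}^{d-1}1/(1-c_i)=(d^2-1)/6$ already established in \eqref{eq_sum1/1-ci} to obtain
\begin{equation*}
\sum_{i=1}^{d-1}\Big|\frac{\delta_i}{1-c_i}\Big|\leq\frac{2\pi}{k}\cdot\frac{d^2-1}{6}\leq\frac{\pi^2(d^2-1)}{12d^3}\leq\frac{\pi^2}{12d}\leq\frac{\pi^2}{36},
\end{equation*}
using the hypotheses $k\geq 4d^3/\pi$ and $d\geq 3$.

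The conclusion then follows from the Weierstrass product bound $\prod_i(1+x_i)\geq 2-\prod_i(1+|x_i|)\geq 2-\exp(\sum_i|x_i|)$, which yields $\prod_{i=1}^{d-1}(1+\delta_i/(1-c_i))\geq 2-e^{\pi^2/36}>1/2$ and, combined with the factorisation above, produces the claimed bound $\prod_{i=1}^{d-1}(1-a_i)\geq d^2/2^d$. The only step that is not entirely routine is the numerical check $e^{\pi^2/36}<3/2$, i.e. $\pi^2/36<\log(3/2)$; for any $d\geq 4$ the estimate becomes noticeably more comfortable, so the extreme case $d=3$ is the only mild constraint on the argument.
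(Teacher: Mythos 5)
Your proof is correct and follows essentially the same route as the paper: the same factorisation $\prod(1-a_i)=\prod(1-c_i)\prod\bigl(1+\delta_i/(1-c_i)\bigr)$, the same identity $\prod_{i=1}^{d-1}(1-c_i)=d^2/2^{d-1}$, the same Lipschitz bound $|\delta_i|\le 2\pi/k$, and the same appeal to \eqref{eq_sum1/1-ci} together with $k\ge 4d^3/\pi$ and $d\ge 3$ to bound $\sum_i|\delta_i|/(1-c_i)\le\pi^2/36$. The only divergence is in the final step: the paper replaces each factor by $1-2\pi/(k(1-c_i))\in[0,1]$ and applies the elementary inequality $\prod(1-x_i)\ge 1-\sum x_i$, getting $\prod\bigl(1+\delta_i/(1-c_i)\bigr)\ge 1-\pi^2/36>1/2$ directly; you instead invoke the Weierstrass-type bound $\prod(1+x_i)\ge 2-\prod(1+|x_i|)\ge 2-\exp\bigl(\sum|x_i|\bigr)$, which yields the numerically looser but still sufficient $2-e^{\pi^2/36}>1/2$. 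Both are fine; the paper's variant is a touch sharper and avoids your closing numerical check $\pi^2/36<\log(3/2)$. For completeness you should note that the inequality $\prod(1+x_i)\ge 2-\prod(1+|x_i|)$ requires each $x_i\ge -1$; this does hold here since $\sum|x_i|\le\pi^2/36<1$ forces $|x_i|<1$ for every $i$, but it is worth saying, since the general statement fails without that hypothesis.
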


\begin{proof}
Indeed, 
\begin{align*}
\prod_{i=1}^{d-1} \left( 1-a_i \right) &= \prod_{ i=1  }^{d-1} (1-c_i) \prod_{i=1}^{d-1}  \left( 1 + \frac{\delta_i}{1 - c_i} \right) \\
		&\geq \frac{d^2}{2^{d-1}} \prod_{i=1}^{d-1}  \left( 1 - \frac{2 \pi}{k(1 - c_i)} \right) \text{ due to \eqref{eq_productci-cjb}}.\\
\end{align*}

If $x_1, \ldots, x_l$ are real numbers belonging to $[0,1]$, then $\prod_{i=1}^{l} \left( 1-x_i \right) \geq 1-(x_1+ \ldots +x_l)$. Since for every $i$, $\frac{2 \pi}{k(1 - c_i)} \leq 1$, it follows:
\begin{align*}
\prod_{i=1}^{d-1} \left( 1-a_i \right) &\geq \frac{d^2}{2^{d-1}} \left( 1-\frac{2\pi}{k}\sum_{i=1}^{d-1}\frac{1}{1-c_i}\right) \\
	 &\geq \frac{d^2}{2^d} \quad \text{ due to \eqref{eq_sum1/1-ci} and }k \geq 4d^3/\pi.
\end{align*}
\end{proof}

Now we are ready to prove Theorem \ref{Theorem 4}. We have the following chain of inequalities:
\begin{align*}
\vartheta\left( \overline{K_{k/d}} \right) &= \frac{k}{d} +
\frac{k}{d} \sum_{n=1}^{d-1} \frac{\prod_{i=1}^{d-1} \left( c_n-a_i
  \right)}{\prod_{i=1}^{d-1} \left( 1-a_i \right)} \quad\text{ from
  \eqref{formula for theta}}\\
	&\geq \frac{k}{d} - \frac{k}{d} \frac{2^d}{d^2} \sum_{n=1}^{d-1} \left|\prod_{i=1}^{d-1} \left( c_n-a_i \right)\right| \text{ by Claim \ref{cl_den}} \\
	&\geq \frac{k}{d} - \frac{k}{d} \frac{2^d}{d^2} \frac{e \pi^2}{2^{d-3}} \frac{d^2}{k^2} \sum_{n=1}^{d-1} \frac{1}{1-c_n} \text{ by Claim \ref{cl_num}} \\
	&\geq \frac{k}{d} - \frac{4e\pi^2}{3}\frac{d}{k}   \quad\text{ due to \eqref{eq_sum1/1-ci}} 
 \end{align*}

\end{proof}

Notice that Theorem \ref{Theorem 4} shows that $\vartheta$ is close to
the circular chromatic number of {\em dense} circular perfect graphs (where dense means that the clique number is large compared to the stability number):
\begin{corollary}
For every $\epsilon >0$, for every positive integer $\alpha$, there is a positive integer $\omega$ such that for every circular-perfect graph $G$ satisfying $\omega(G) \geq \omega$ and $\alpha(G) \leq \alpha$, we have $|\vartheta\left( \overline{G} \right) - \chi_c(G) | \leq \epsilon$.
\end{corollary}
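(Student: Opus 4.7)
The plan is to reduce this corollary to Theorem~\ref{Theorem 4} through Proposition~\ref{prop}. The key observation is that, when we write $\chi_c(G) = k/d$ in lowest terms, the denominator $d$ is controlled by $\alpha(G)$; so fixing $\alpha$ and letting $\omega(G)$ grow forces $k$ to grow while $d$ stays bounded, and Theorem~\ref{Theorem 4} then closes the gap.

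More precisely, let $G$ be a circular-perfect graph on $n$ vertices with $\omega(G) \geq \omega$ and $\alpha(G) \leq \alpha$. Proposition~\ref{prop} furnishes a coprime pair $(k,d)$ with $k \geq 2d$, $k \leq n$, $\chi_c(G) = \omega_c(G) = k/d$, and $\vartheta(\overline{G}) = \vartheta(\overline{K_{k/d}})$. Since $\omega(G) = \lfloor k/d \rfloor$, we have $k \geq d\,\omega(G)$; and since $\chi(G) = \lceil k/d \rceil \leq \omega(G)+1$, the trivial inequality $n \leq \chi(G)\alpha(G)$ gives $k \leq n \leq (\omega(G)+1)\alpha(G)$. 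Combining these bounds yields
\[
d \leq \frac{k}{\omega(G)} \leq \Big(1+\frac{1}{\omega(G)}\Big)\alpha(G) \leq 2\alpha(G),
\]
so in particular $d/k \leq 1/\omega(G)$.

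With this in hand, for $d \geq 3$ I would choose $\omega$ large enough that $\omega \geq 16\alpha^2/\pi$ (which, via $k \geq d\,\omega$ and $d \leq 2\alpha$, secures the hypothesis $k \geq 4d^3/\pi$ of Theorem~\ref{Theorem 4}) and $\omega \geq 4e\pi^2/(3\epsilon)$. Theorem~\ref{Theorem 4}, together with the trivial upper bound $\vartheta(\overline{K_{k/d}}) \leq k/d$, then gives
\[
|\vartheta(\overline{G}) - \chi_c(G)| = \big|\vartheta(\overline{K_{k/d}}) - k/d\big| \leq \frac{4e\pi^2}{3}\cdot\frac{d}{k} \leq \frac{4e\pi^2}{3\,\omega(G)} \leq \epsilon.
\]
The cases $d=1$ (where $\vartheta(\overline{K_k}) = k = \chi_c(K_k)$ exactly) and $d=2$ (an odd cycle, for which $|\vartheta(C_k) - k/2| = O(1/k) \leq O(1/\omega)$ by \eqref{theta cyclic}) are handled separately with analogous or better bounds. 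The only non-routine step is the bound $d \leq 2\alpha(G)$; once this is secured, the corollary is a direct application of Theorem~\ref{Theorem 4}.
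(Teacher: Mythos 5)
Your proof is correct, and it supplies exactly the argument the paper leaves implicit (the corollary is stated after Theorem~\ref{Theorem 4} without an explicit proof). The essential and non-obvious step --- which you identify and establish cleanly --- is the bound on the denominator, $d \leq 2\alpha(G)$, obtained from Proposition~\ref{prop} ($k \leq n$), the elementary bound $n \leq \chi(G)\alpha(G)$, and the circular-perfection relations $\omega(G)=\lfloor k/d\rfloor$, $\chi(G)=\lceil k/d\rceil \leq \omega(G)+1$. Once $d$ is bounded in terms of $\alpha$ and $k \geq d\,\omega(G)$, the hypothesis $k \geq 4d^3/\pi$ of Theorem~\ref{Theorem 4} is met for $\omega$ large enough, and the estimate $\vartheta(\overline{K_{k/d}}) \geq k/d - \tfrac{4e\pi^2}{3}\tfrac{d}{k}$ combined with the trivial bound $\vartheta(\overline{K_{k/d}}) \leq k/d$ closes the gap. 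Your side-remarks on $d=1$ and $d=2$ are also fine: for $d=1$ the quantity is identically zero, and for $d=2$ the explicit formula \eqref{theta cyclic} (or \eqref{eq_oddholes}) gives $O(1/k)$ directly.
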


\end{document}